\newcommand{\cO}{\mathcal{O}}
\newcommand{\Q}{\mathbb{Q}}
\newcommand{\Z}{\mathbb{Z}}
\newcommand{\m}{\to}
\DeclareMathOperator{\Id}{Id}
\DeclareMathOperator{\sgn}{sgn}
\DeclareMathOperator{\Vor}{Vor}
\DeclareMathOperator{\cH}{\mathcal{H}}
\providecommand{\mat}[1]{\begin{bmatrix} #1 \end{bmatrix}}
\newcommand{\cS}{\mathcal{S}}
\newcommand{\HH}{\mathbb{H}}
\newcommand{\OO}{\mathcal{O}}
\newcommand{\QQ}{\mathbb{Q}}
\newcommand{\ZZ}{\mathbb{Z}}
\newcommand{\CC}{\mathbb{C}}
\numberwithin{thmcounter}{section}
\newaliascnt{thmauto}{thmcounter}
\newaliascnt{Defauto}{thmcounter}
\newaliascnt{exauto}{thmcounter}
\newaliascnt{exsauto}{thmcounter}
\newaliascnt{lemauto}{thmcounter}
\newaliascnt{propauto}{thmcounter}
\newaliascnt{corauto}{thmcounter}
\newaliascnt{conjauto}{thmcounter}
\newaliascnt{queauto}{thmcounter}
\newaliascnt{remauto}{thmcounter}
\theoremstyle{plain}
\newtheorem{theorem}[thmauto]{Theorem}
\newtheorem{lemma}[lemauto]{Lemma}
\newtheorem{proposition}[propauto]{Proposition}
\newtheorem{corollary}[corauto]{Corollary}
\newtheorem*{rem*}{Remark}
\newtheorem*{thm*}{Theorem}
\newtheorem*{exs*}{Examples}
\theoremstyle{definition}
\newtheorem{definition}[Defauto]{Definition}
\newtheorem{remark}[remauto]{Remark}
\newcommand{\GL}{\mathrm{GL}}
\newcommand{\SL}{\mathrm{SL}}
\newcommand{\PSL}{\mathrm{PSL}}
\newcommand{\St}{\mathsf{St}}
\newcommand{\B}{\mathbf{B}}
\newcommand{\PB}{\mathbf{PB}}
\newcommand{\T}{\mathbf{T}}
\newcommand{\cB}{\mathcal{B}}
\newcommand{\cPB}{\mathcal{PB}}
\newcommand{\cT}{\mathcal{T}}
\newcommand{\Ab}{\underline{\mathrm{Ab}}}
\newcommand{\hgt}{\mathrm{ht}}
\newcommand{\rank}{\mathrm{rank}}
\title{Non-integrality of some Steinberg modules}
\author{Jeremy Miller}
\thanks{Jeremy Miller was supported in part by NSF grant DMS-1709726.}
\address{Department of Mathematics, Purdue University, USA}
\email{\href{mailto:jeremykmiller@purdue.edu}{jeremykmiller@purdue.edu}}
\author{Peter Patzt}
\address{Department of Mathematics, Purdue University, USA}
\email{\href{mailto:ppatzt@purdue.edu}{ppatzt@purdue.edu}}
\author{Jennifer C. H. Wilson}
\address{Department of Mathematics, University of Michigan, USA}
\email{\href{mailto:jchw@umich.edu}{jchw@umich.edu}}
\author{Dan Yasaki}
\address{Department of Mathematics and Statistics, The University of North Carolina Greensboro, USA}
\email{\href{mailto:d_yasaki@uncg.edu}{d\_yasaki@uncg.edu}}
\date{\today}
\begin{document}

\begin{abstract}	
We prove that the Steinberg module of the special linear group of a quadratic imaginary number ring which is not Euclidean is not generated by integral apartment classes. Assuming the generalized Riemann hypothesis, this shows that the Steinberg module of a number ring is generated by integral apartment classes if and only if the ring is Euclidean. We also construct new cohomology classes in the top dimensional cohomology group of the special linear group of some quadratic imaginary number rings.
\end{abstract}
	
\maketitle

\tableofcontents

\section{Introduction}

 The cohomology of arithmetic groups has many applications to number theory and
 algebraic $K$-theory. Let $\cO_K$ be the ring of integers in a
 number field $K$. One of the most useful tools for studying the high
 dimensional cohomology of $\SL_n(\cO_K)$  is the Steinberg module $\St_n(K)$, a representation
 of $\GL_n(K)$. Let $r_1$ denote the number of real embeddings of $K$, and let $r_2$ denote the number of pairs of complex embeddings.  Borel and Serre \cite{BoSe} proved that \[H^{\nu_n-i}(\SL_n(\cO_K);\Q) \cong H_i(\SL_n(\cO_K) ; \Q \otimes \St_n(K) )  \] with \[  \nu_n=r_1 \left( \frac{(n+1)n-2}{2}  \right)+ r_2 (n^2-1) - n+1.\] The number $\nu_n$ is the virtual cohomological dimension of $\SL_n(\cO_K)$, and all rational cohomology groups (even with twisted coefficients) vanish above this degree. 
 
 To understand the cohomology in degree $\nu_n$, it is important to understand generators for $\St_n(K)$ as an $\SL_n(\cO_K)$-module. There is a natural subset of $\St_n(K)$ consisting of so-called \emph{integral apartment classes} and it is useful to know whether these classes generate the entire Steinberg module. Let $\cT_n(K)$ denote the Tits building of $K$, that is, the geometric realization of the poset of proper non-empty subspaces of $K^n$ ordered by inclusion. The Steinberg module is defined by the formula \[ \St_n(K):=\widetilde H_{n-2}(\cT_n(K)). \] Let $\vec L$ denote a decomposition $K^n = L_1 \oplus \cdots \oplus L_n$ into lines. Let $A_{\vec L}$ be the full subcomplex of $\cT_n(K)$ with vertices direct sums of proper non-empty subsets of $\{L_1,\ldots, L_n \}$. Each subcomplex $A_{\vec L}$ is homeomorphic to an $(n-2)$-sphere called an \emph{apartment} and the images of the two choices of fundamental classes of $A_{\vec L}$ in $\St_n(K)=\widetilde H_{n-2}(\cT_n(K))$ are called \emph{apartment classes}. We say that an apartment or apartment class is \emph{integral} if \[ \cO_K^n = (\cO_K^n \cap L_1) \oplus \ldots \oplus (\cO_K^n \cap L_n). \]  In other words, $\vec L$ is integral if it comes from a direct sum decomposition of $\cO_K^n$ into rank-one projective submodules. 
 
One of the main topics of this paper is the question:
 \vspace{.1in}

\noindent \textbf{For what number fields $K$ is }$\St_n(K)$ \textbf{ generated by integral apartment classes?}  
 
 \vspace{.1in}
 
 \noindent Integral apartment classes vanish after taking coinvariants by $\SL_n(\cO_K)$ for $n$ larger than the class number of $\cO_K$, so if $\St_n(K)$ is generated by integral apartment classes, then $$H^{\nu_n}(\SL_n(\cO_K);\Q)  \cong H_0(\SL_n(\cO_K);\Q \otimes \St_n(K)   )$$ vanishes for $n$ sufficiently large. In \cite{AR}, Ash--Rudolph proved that $\St_n(K)$ is generated by integral apartment classes if $\cO_K$ is Euclidean.  For $n \geq 2$, Church--Farb--Putman proved that $H^{\nu_n}(\SL_n(\cO_K);\Q)$ does not vanish if the class number of $\cO_K$ is greater than $1$ \cite[Theorem D]{CFP-Integrality} and moreover that $\St_n(K)$ is not generated by integral apartment classes for $n \geq 2$ for such rings \cite[Theorem B]{CFP-Integrality}. We prove the following.

\begin{theorem} \label{main}
Let $\cO_K$ be a quadratic imaginary number ring that is a PID but is not Euclidean. Then $\St_n(K)$ is not generated by integral apartment classes.  
\end{theorem}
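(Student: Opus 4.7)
The plan is to reduce to the case $n = 2$, apply a classical theorem of P.~M.~Cohn, and then extend to higher $n$.

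For $n = 2$, the Tits building $\cT_2(K)$ is the discrete set $\mathbb P^1(K)$, so $\St_2(K) \cong \widetilde H_0(\mathbb P^1(K))$ is the augmentation kernel of $\Z[\mathbb P^1(K)]$, and apartment classes are differences $[L_1] - [L_2]$. Since $\cO_K$ is a PID, every line has the form $L = \cO_K v$ for a primitive vector $v \in \cO_K^2$, and the apartment $\{\cO_K v_1, \cO_K v_2\}$ is integral precisely when $\det(v_1 \mid v_2) \in \cO_K^\times$. I would then define the \emph{Farey graph} $\mathcal F$ with vertex set $\mathbb P^1(K)$ and edges the integral splittings; comparing $\widetilde H_0(\mathbb P^1(K))$ with $\widetilde H_0(\pi_0(\mathcal F))$ shows that integral apartment classes generate $\St_2(K)$ if and only if $\mathcal F$ is connected.

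Next I would identify connectivity of $\mathcal F$ with the group-theoretic equality $\SL_2(\cO_K) = E_2(\cO_K)$, where $E_2(\cO_K)$ is the subgroup generated by elementary matrices. Walks in $\mathcal F$ correspond to sequences of elementary row operations on ordered bases, so the $E_2(\cO_K)$-orbit of $[1:0]$ is its connected component in $\mathcal F$; $\SL_2(\cO_K)$ acts transitively on $\mathbb P^1(K)$ because $\cO_K$ is a PID; and since $\cO_K^\times = \{\pm 1\}$ for the rings at hand (those from $\Q(\sqrt{-d})$ with $d \in \{19, 43, 67, 163\}$), the $\SL_2(\cO_K)$-stabilizer of $[1:0]$ already lies in $E_2(\cO_K)$. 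Cohn's 1966 paper ``On the structure of the $GL_2$ of a ring'' proves $\SL_2(\cO_K) \neq E_2(\cO_K)$ for exactly these rings; hence $\mathcal F$ is disconnected and $\St_2(K)$ is not generated by integral apartment classes.

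To extend to $n \geq 3$, observe that since the class number of $\cO_K$ is one, integral apartment classes vanish in $H_0(\SL_n(\cO_K); \St_n(K) \otimes \Q)$, so it suffices to produce a non-trivial class in these coinvariants --- equivalently, a non-trivial class in $H^{\nu_n}(\SL_n(\cO_K); \Q)$. A natural approach is to combine a non-integral apartment class in $\St_2(K)$ with the standard integral apartment class on a complementary $K^{n-2}$-summand arising from the decomposition $K^n = K^2 \oplus K^{n-2}$, producing a candidate ``suspended'' class in $\St_n(K)$. The main obstacle will be verifying that this class survives the $\SL_n(\cO_K)$-coinvariants modulo all integral apartments: unlike the $n = 2$ case, one cannot invoke any failure of $\SL_n = E_n$ (which holds for $n \geq 3$ over Dedekind domains), so a more delicate argument is needed --- perhaps via a partial basis complex in the spirit of Church--Farb--Putman, or an explicit construction of cohomology classes via Voronoi reduction theory over $\cO_K$.
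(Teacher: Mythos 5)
For $n=2$, your argument is essentially correct and matches the paper's in spirit: both hinge on Cohn's theorem that $\SL_2(\cO_K) \neq E_2(\cO_K)$ for $d = -19, -43, -67, -163$. Your Farey graph $\mathcal F$ is precisely the paper's complex of partial frames $\cB_2(\cO_K)$, and the equivalence ``integral apartments generate $\St_2$ iff $\mathcal F$ is connected'' is the content of the paper's \autoref{n=2case} (and is what the paper's \autoref{B2ConnectedElementary} plugs into via Cohn). One caveat: the claim ``walks in $\mathcal F$ correspond to sequences of elementary row operations, so the $E_2$-orbit of $[1{:}0]$ is its connected component'' is not as immediate as you make it sound --- for instance $[1{:}0]$ and $[1{:}a]$ lie in the same $E_2$-orbit but are not adjacent in $\mathcal F$ for non-unit $a$ --- and needs an argument about stabilizers. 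The paper instead invokes Church--Farb--Putman's result that failure of $\GL_2$ to be generated by elementary-plus-diagonal matrices implies $\cPB_2$ is not Cohen--Macaulay, then transfers this to $\cB_2$ via the complete-join-complex machinery of Hatcher--Wahl. Either route can be made rigorous.

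For $n \geq 3$ there is a genuine gap, and in fact the approach you sketch cannot succeed in full generality. You propose reducing to non-vanishing of $H^{\nu_n}(\SL_n(\cO_K);\Q)$, and while the reduction itself is valid (integral apartment classes do vanish in $\Q$-coinvariants for a PID and $n\geq 2$, so nonzero coinvariants would imply non-integrality), you have no argument to establish the non-vanishing, and the ``suspended class'' you propose to construct comes with no mechanism for showing it survives to the coinvariants. More seriously, the paper explicitly flags that this route is a dead end for $d=-19$: by Rahm's computation $H^{\nu_2}(\SL_2(\cO_{-19});\Q) = 0$, so there is nothing to ``suspend,'' and the paper's own cohomological result (\autoref{mainCohomology}) excludes $d=-19$ entirely. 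The paper emphasizes that its proof of non-integrality does \emph{not} rely on homological non-vanishing. The actual argument works at the level of the Steinberg module itself, not its coinvariants: one maps the poset $\B'_n$ of partial frames not spanning $R^n$ to the Tits poset $\T_n$ and analyzes the resulting Quillen/Charney spectral sequence. The connectivity results (van der Kallen, plus the quotient to frames) force a short exact sequence
\[
0 \longrightarrow E^2_{n-2,0} \longrightarrow \St_n \longrightarrow \bigoplus_{\rank V = 2} \St(R^n/V) \otimes \widetilde H_0(\B(V)) \longrightarrow 0,
\]
and the map sending an integral frame to its apartment class factors through $E^2_{n-2,0}$. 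Since $\widetilde H_0(\B_2) \neq 0$ (disconnectedness of your Farey graph) and $\St_{n-2} \neq 0$, the right-hand term is nonzero, so integral apartments cannot generate $\St_n$. This is what you are missing: a structural reason, independent of any coinvariant computation, that integral apartments land in a proper submodule.
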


Let $\cO_d$ denote the ring of integers in $\Q(\sqrt d)$. The only examples of
rings satisfying the hypotheses of the above theorem are $\cO_d$ for $d=-19$,
$-43$, $-67$ and $-163$. However, assuming the generalized Riemann hypothesis, every number ring either has class number greater than $1$, is Euclidean, or is quadratic imaginary; see Weinberger \cite{Wein}.  Thus we have the following corollary.


\begin{corollary} \label{corollary}
Let $\cO_K$ be a ring of integers in a number field $K$ and consider $n \geq 2$. The generalized Riemann hypothesis implies that $\St_n(K)$ is generated by integral apartment classes if and only if $\cO_K$ is Euclidean.
\end{corollary}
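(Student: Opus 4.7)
The plan is to prove the corollary as a case analysis combining Theorem~\ref{main} with two existing results: Ash--Rudolph's integrality statement for Euclidean rings \cite{AR}, and Church--Farb--Putman's non-integrality statement \cite[Theorem B]{CFP-Integrality} for number rings with class number greater than $1$. The generalized Riemann hypothesis enters only through Weinberger's theorem \cite{Wein}, which asserts that every number ring is either Euclidean, of class number greater than $1$, or a quadratic imaginary PID. In the last case, if one additionally assumes $\cO_K$ is not Euclidean, then $\cO_K = \cO_d$ for some $d \in \{-19,-43,-67,-163\}$, placing $\cO_K$ squarely inside the hypotheses of Theorem~\ref{main}.

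The ``if'' direction is immediate: when $\cO_K$ is Euclidean, Ash--Rudolph \cite{AR} gives that $\St_n(K)$ is generated by integral apartment classes for every $n \geq 2$. For the ``only if'' direction, I would assume $\cO_K$ is not Euclidean and split into two subcases using Weinberger's classification. If $\cO_K$ has class number greater than $1$, then \cite[Theorem B]{CFP-Integrality} directly gives that $\St_n(K)$ is not generated by integral apartment classes for $n \geq 2$. Otherwise $\cO_K$ is a quadratic imaginary PID that is not Euclidean, so Theorem~\ref{main} applies and yields the same conclusion. Together these exhaust all possibilities for a non-Euclidean ring of integers under GRH.

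The main difficulty does not lie in the corollary itself but entirely in Theorem~\ref{main}; once that theorem is in hand, the corollary is a short bookkeeping argument organized by Weinberger's classification, filling in precisely the case left open between Ash--Rudolph's positive result (Euclidean) and Church--Farb--Putman's negative result (class number greater than $1$).
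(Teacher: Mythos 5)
Your argument is correct and is exactly the paper's reasoning: the paper derives the corollary from the discussion immediately preceding it, invoking Ash--Rudolph for the Euclidean case, Church--Farb--Putman for class number greater than one, Weinberger's GRH classification to reduce the remaining case to quadratic imaginary PIDs that are not Euclidean, and then Theorem~\ref{main}. Your case split and the role you assign GRH match the paper's intent precisely.
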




For $K$ quadratic imaginary, $\nu_n=n^2-n$. Our proof of \autoref{main} also gives the following.
 
 \begin{theorem} \label{mainCohomology} For all $n$, we have \[\dim_{\Q} H^{\nu_{2n}}(\SL_{2n}(\cO_{d});\Q) \geq 
 \begin{cases}
1 & \text{ for $d=-43$}, \\
2^n & \text{ for $d=-67$}, \\
6^n & \text{ for $d=-163$.}
\end{cases} \]
 
  \end{theorem}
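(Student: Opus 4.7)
By the Borel--Serre duality
\[
H^{\nu_{2n}}(\SL_{2n}(\cO_d);\Q) \cong H_0\bigl(\SL_{2n}(\cO_d);\ \Q \otimes \St_{2n}(K)\bigr),
\]
so it suffices to exhibit the stated number of linearly independent classes in the Steinberg coinvariants. The plan is to combine the rank-two construction used to prove \autoref{main} with an external product on Steinberg modules, and then detect the resulting classes by pairings built from the Voronoi cell structure.

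The first step is to extract a precise count from the proof of \autoref{main}: for each $d \in \{-43,-67,-163\}$ that argument produces elements $\alpha_{d,1},\ldots,\alpha_{d,m_d} \in \St_2(K)$, with $m_d = 1,\,2,\,6$ respectively, whose images in $H_0(\SL_2(\cO_d); \Q \otimes \St_2(K))$ are linearly independent. These elements are built from specific cells of the Voronoi--Koecher decomposition of hyperbolic $3$-space for $\SL_2(\cO_d)$ that are not equivalent to integral apartments. Next, fix the standard decomposition $K^{2n} = K^2 \oplus \cdots \oplus K^2$; the join of Tits buildings induces on reduced homology an $\SL_2(\cO_d)^n$-equivariant external product map
\[
\mu \colon \St_2(K)^{\otimes n} \longrightarrow \St_{2n}(K),
\]
sending a tensor product of apartment classes to the apartment class of the concatenated line decomposition. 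For each multi-index $(i_1,\ldots,i_n) \in \{1,\ldots,m_d\}^n$ the image $\mu(\alpha_{d,i_1} \otimes \cdots \otimes \alpha_{d,i_n})$ is a candidate class in $H_0(\SL_{2n}(\cO_d); \Q \otimes \St_{2n}(K))$, producing $m_d^n$ candidates in total.

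The main obstacle is showing that these $m_d^n$ candidates remain linearly independent after passing to $\SL_{2n}(\cO_d)$-coinvariants. The plan is to detect each candidate by pairing it against an explicit cocycle constructed multiplicatively from the Voronoi cell structure for $\SL_{2n}(\cO_d)$, extending the detecting cocycles used for $\SL_2(\cO_d)$ at the rank-two level. The crucial technical step is to control the extra relations introduced by elements of $\SL_{2n}(\cO_d)$ that lie outside the block-diagonal subgroup---principally block-permutation matrices, unipotent matrices mixing blocks, and diagonal torus elements---and to verify that none of these relations identifies or annihilates any of the candidate classes. Provided the rank-two pairings are non-degenerate on the $\alpha_{d,j}$'s and behave multiplicatively with respect to direct sums of Hermitian forms, a Künneth-type argument then yields the exponential lower bound $m_d^n$.
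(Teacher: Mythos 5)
Your proposal correctly sets up Borel--Serre duality and correctly guesses that the multiplicative structure should come from a rank-two construction iterated over a block decomposition $K^{2n}=K^2\oplus\cdots\oplus K^2$. However, your argument has a genuine gap, and the paper's proof runs in the opposite direction to avoid it.

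You propose to build classes \emph{into} $\St_{2n}(K)$ via an external product $\mu\colon \St_2(K)^{\otimes n}\to\St_{2n}(K)$ and then detect them after coinvariants by pairing against cocycles coming from a Voronoi-type cell structure for $\SL_{2n}(\cO_d)$. You yourself flag the crucial difficulty: "control the extra relations introduced by elements of $\SL_{2n}(\cO_d)$ that lie outside the block-diagonal subgroup." That is precisely where the argument breaks, and nothing in your sketch resolves it. The Voronoi-cell description is only available (and only used in the paper) for $\GL_2(\cO_d)$; in higher rank you would need to construct explicit dual cocycles, verify that block-mixing unipotents and block permutations act compatibly with them, and prove a K\"unneth-type nondegeneracy statement at the level of coinvariants. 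None of these steps is given, and they do not follow automatically from the rank-two computation: passing to $\SL_{2n}(\cO_d)$-coinvariants rather than $\SL_2(\cO_d)^n$-coinvariants imposes many more relations, and a map into $\St_{2n}$ gives you no a priori control on what survives them.

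The paper avoids this entirely by constructing a map \emph{out of} $\St_{2n}$. Combining \autoref{StSES} with \autoref{H(T_n,tilde H_0)} yields a $\GL_{2n}(R)$-equivariant surjection
\[
\St_{2n}(R)\twoheadrightarrow \bigoplus_{\rank V = 2}\St(R^{2n}/V)\otimes\widetilde H_0(\B(V)),
\]
and right-exactness of coinvariants (\autoref{lemmaSurj}, \autoref{lemmaCoinvariance}) converts this into a surjection
\[
(\St_{2n})_{\SL_{2n}}\twoheadrightarrow (\St_{2n-2})_{\SL_{2n-2}}\otimes_{\Z[\Z/2\Z]}(\widetilde H_0(\B_2))_{\SL_2}\cong (\St_{2n-2})_{\SL_{2n-2}}\otimes_{\Z[\Z/2\Z]}(\St_2)_{\SL_2},
\]
the last step by \autoref{StB2}. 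Because the map goes outward, a lower bound on the dimension of the target immediately gives a lower bound on the source---no detection cocycles are needed. The paper then tracks the $\Z/2\Z\cong\GL/\SL$-representation structure: letting $t_m,s_m$ be the trivial and sign multiplicities in $\Q\otimes(\St_m)_{\SL_m}$, the surjection gives $t_m\ge t_{m-2}t_2$ and $s_m\ge s_{m-2}s_2$, hence $t_{2n}+s_{2n}\ge t_2^n + s_2^n$. The base case is the new computation: Rahm's $\SL_2$ result (\autoref{NonvanishingImaginary}) gives $t_2+s_2 = 1,2,6$, and the Voronoi computation for $\GL_2$ (\autoref{GL2}) shows $t_2=0$, forcing $s_2=1,2,6$ and yielding the $m_d^n$ bound. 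Note that this $\GL_2$ input is essential and not present in your sketch: without $t_2=0$, one only gets $t_2^n+s_2^n$, which is strictly smaller than $(t_2+s_2)^n$ and would not match the claimed bound.

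In short: right strategic instincts (iterate rank two over blocks, use Borel--Serre duality), but the proposal stops at exactly the hard step and points at tools (high-rank Voronoi cocycles, K\"unneth nondegeneracy for coinvariants) that the paper never needs. The surjection from $\St_{2n}$ is the idea that makes the whole thing work, and without it or some substitute the proposed argument is incomplete.
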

 
 This shows that the rational cohomological dimension and the virtual cohomological dimension agree for these groups. This is the first example of homology in the virtual cohomological dimension of $\SL_n(\cO_K)$ for large $n$ not coming from the class group. In particular, this gives the first example of the failure of \cite[Theorem D]{CFP-Integrality} to be sharp for large $n$. See \autoref{mapBounds} for a more conceptual description of these bounds.
 
 Our proof involves using cohomology classes in  $H^{\nu_{2}}(\SL_{2}(\cO_d);\Q)$ to construct classes in  $H^{\nu_{2n}}(\SL_{2n}(\cO_d);\Q)$. In particular, the inequalities in \autoref{mainCohomology} are actually equalities for $2n=2$ by the work of Rahm \cite[Proposition 1]{Rahm}.  This strategy does not apply to $ H^{\nu_{2n}}(\SL_{2n}(\cO_{-19});\Q)$ since $ H^{\nu_{2}}(\SL_{2}(\cO_{-19});\Q)$ vanishes. This also highlights the fact that our proof of non-integrality does not rely on homological non-vanishing. 
 
 \subsection*{Acknowledgments}
 We thank Alexander Kupers, Ben McReynolds, Rohit Nagpal,  Andrew Putman, and the anonymous referee for helpful comments. 

\section{Posets}

In this section, we begin by fixing notation for posets. Then we recall some connectivity results for complexes of unimodular vectors and their variants. 

\subsection{Notation} \hfill

\begin{definition} Given a simplicial complex $\mathcal{X}$, there is an associated poset $\mathbf{X}$ whose elements are the simplices of $\mathcal{X}$, ordered by inclusion. \end{definition} 
 
 In this paper we adopt the convention that we use calligraphic fonts for simplicial complexes and boldface fonts for their associated posets.

 \begin{definition}
 Given a poset $\mathbf{Y}$, let $\Delta(\mathbf Y)$ denote the simplicial complex of non-degenerate simplices of the nerve of the poset.  \end{definition} 
 
Concretely, the $p$--simplices of $\Delta(\mathbf Y)$ are given by ordered $(p+1)$--tuples \[ \{ y_0 < y_1 < \cdots <y_p \; | \; y_i \in \mathbf{Y} \}.\] 
Define the \emph{dimension} $\dim(\mathbf{Y})$ of $\mathbf{Y}$ to be the dimension of $\Delta(\mathbf{Y})$.  Let $|\mathbf{Y}|$ denote the geometric realization of $\Delta(\mathbf{Y})$. We refer to the \emph{connectivity} of a poset or simplicial complex to mean the connectivity of its geometric realization.

We remark that, given a simplicial complex  $\mathcal{X}$ with associated poset  $\mathbf{X}$, $\Delta(\mathbf{X})$ is the barycentric subdivision of $\mathcal{X}$. Thus they are not isomorphic in general but have homeomorphic geometric realizations.

 \begin{definition} Given a poset $\mathbf{Y}$ and $y \in \mathbf{Y}$, let \[ \mathbf{Y}_{\leq y} := \{ y' \in \mathbf{Y} \; | \; y' \leq y \}\] and \[ \mathbf{Y}_{> y} := \{ y' \in \mathbf{Y} \; | \; y' > y \}.\]  Define the \emph{height} of $y$ to be \[\hgt(y) := \dim(\mathbf{Y}_{\leq y}).\] This is one less than the length of a maximal length chain with supremum $y$.  \end{definition}

 \subsection{The complex of partial frames} \hfill
 
 In this subsection, we describe the complex of partial bases and a variant called the complex of partial frames. Here and in the rest of the paper, the symbol $R$ will denote a commutative ring. 

\begin{definition} For a finite-rank free $R$--module $V$, we associate a simplicial complex $\cPB(V)$ called the  \emph{complex of partial bases} of $V$. The vertices of $\cPB(V)$ are primitive vectors in $V$, and vertices $v_0, v_1, \ldots, v_p$ span a $p$--simplex if and only if the vectors  $v_0, v_1, \ldots, v_p$ are a \emph{partial basis} for $V$, that is, a subset of a basis. When $V=R^n$, we will abbreviate this by $\cPB_n$ or by $\cPB_n(R)$ when we want to emphasize the ring.  We write $\PB(V)$, $\PB_n$, or  $\PB_n(R)$ to denote the posets associated to these simplicial sets; these are the posets of partial bases under inclusion. 
\end{definition}

Note that the complex $\cPB(V)$  has dimension $(\rank(V)-1)$. 


\begin{definition} For $V$ a finite-rank free $R$-module, we write $\cB(V)$ (similarly $\cB_n$, $\cB_n(R)$)  for the simplicial complex defined as the quotient of $\cPB(V)$ by identifying vertices $v, u$ if the vectors $v,u \in V$ differ by multiplication by a unit. A $p$--simplex of  $\cB(V)$ encodes a decomposition of a direct summand of $V$ into a direct sum of $(p+1)$ rank-one free submodules of $V$. Following Church--Putman \cite{CP}, we call such a simplex a \emph{partial frame} and $\cB(V)$ the \emph{complex of partial frames} of $V$. We write  $\B(V)$, $\B_n$, or $\B_n(R)$, respectively, for the associated posets. 
\end{definition} 


\begin{definition} For $V$ a finite-rank free $R$-module, we write $\cB'(V)$ (similarly $\cB'_n$, $\cB'_n(R)$)  for the $(\rank(V)-2)$--skeleton of $\cB(V)$, and $\B'(V)$, $\B'_n$, or $\B'_n(R)$, respectively, for the associated posets. 
\end{definition} 

Simplices of $\B'(V)$ consist of partial frames whose sum is not all of $V$. 

\begin{definition} Let $R$ be an integral domain. For a finite-rank free $R$--module $V$, we write $\T(V)$ (similarly $\T_n$ or $\T_n(R)$) for  the poset of proper non-zero direct summands of $V$ ordered by inclusion. The associated simplicial complex is the \emph{Tits building} for $V$. We abbreviate $\Delta(\T(V))$, $\Delta(\T_n)$, and $\Delta(\T_n(R))$ by $\cT(V)$, $\cT_n$, and $\cT_n(R)$ respectively.

\end{definition}

\begin{remark} \label{RemarkFrac} Let $V$ be a finite-rank free $R$--module and $R$ a Dedekind domain. Since $R$ is an integral domain, it embeds in its field of fractions $\mathrm{Frac}(R)$. There is a natural bijection between the direct summands of $V$ and the subspaces of the $\mathrm{Frac}(R)$--vector space $\mathrm{Frac}(R) \otimes_R V$ given by sending a submodule of $R^n$ to its $\mathrm{Frac}(R)$-span in $\mathrm{Frac}(R) \otimes_R V$. This map has an inverse given by intersection with $R^n$. It is an elementary exercise
to check that these maps are inverses using the following property of finitely-generated modules over Dedekind domains: a submodule $M$ of a finitely-generated module $V$ is a summand if and only if $V/M$ is torsion-free. This bijection induces a natural isomorphism $\T(V) \cong \T(\mathrm{Frac}(R) \otimes_R V)$, where $V$ is viewed as an $R$-module and $\mathrm{Frac}(R) \otimes_R V$ is viewed as a $\mathrm{Frac}(R)$-module. 
\end{remark}

 \subsection{Connectivity results} \hfill

The following is known as the Solomon--Tits Theorem. It seems to have first appeared in print in Solomon \cite{Solomon} in the case of finite fields. The general case appears in Garland \cite[Theorem 2.2]{Garland} and Quillen \cite[Theorem 2]{Quillen-Ki}. 


\begin{proposition}[{\bf Solomon--Tits Theorem}] \label{SolomonTits} Let $K$ be a field.  For $n \geq 2$, $|\cT_n(K)|$ has the homotopy type of a wedge of $(n-2)$--spheres.
\end{proposition}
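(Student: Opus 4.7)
The plan is to argue by induction on $n \ge 2$, establishing that $|\cT_n(K)|$ is homotopy equivalent to a wedge of $(n-2)$--spheres. For the base case $n=2$, every proper non-zero subspace of $K^2$ is a line, so the poset $\T_2(K)$ is an antichain. Therefore $|\cT_2(K)|$ is a non-empty discrete space, which is a wedge of $0$--spheres.

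For the inductive step, fix $n \ge 3$ and suppose the result holds for all $2 \le m < n$. Since $\dim |\cT_n(K)| = n-2$, it suffices by the Hurewicz and Whitehead theorems to prove that $|\cT_n(K)|$ is $(n-3)$--connected. To do so, fix a hyperplane $H \subset K^n$ and consider the subposet
\[
\mathbf{S}_H := \{V \in \T_n(K) : V \subseteq H \text{ or } V \supseteq H\} \subseteq \T_n(K).
\]
Because $H$ is a hyperplane, the only element of $\mathbf{S}_H$ containing $H$ is $H$ itself. Thus $\mathbf{S}_H$ is the poset $\T(H)$ with the element $H$ adjoined above everything, and its realization is the cone on $|\cT(H)|$ with apex $H$, hence contractible.

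The main step is then to show that the inclusion $|\mathbf{S}_H| \hookrightarrow |\cT_n(K)|$ is $(n-3)$--connected; since $|\mathbf{S}_H|$ is contractible, this will yield the desired connectivity. I would proceed by a ``straightening'' or ``bad simplex'' argument: given a simplicial map $f \colon S^k \to |\cT_n(K)|$ with $k \le n-3$, homotope $f$ into $|\mathbf{S}_H|$ by removing simplices involving subspaces $V$ incomparable with $H$. The natural move is to replace such a $V$ by $V \cap H$, provided this intersection is non-zero. Where this fails, namely when $V$ is a line not contained in $H$, one uses that the link of $V$ in $\cT_n(K)$ is the join $|\cT(V)| \ast |\cT(K^n/V)|$, which by induction is a wedge of $(n-3)$--spheres and in particular $(n-4)$--connected; this enables the offending simplices to be absorbed one at a time.

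The main technical obstacle is making this inductive straightening precise while simultaneously tracking the link connectivities to justify each absorption. An alternative is Quillen's approach: compare $\T_n(K)$ to the subposet $\mathbf{S}_H$ via his Theorem A, reducing the problem to connectivity statements about the fibers of the inclusion, which again yield to the inductive hypothesis. Once $(n-3)$--connectivity is established, Hurewicz and Whitehead combine with the $(n-2)$--dimensionality of $|\cT_n(K)|$ to conclude that it has the homotopy type of a wedge of $(n-2)$--spheres.
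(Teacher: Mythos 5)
The paper does not give a proof of this proposition; it is stated with citations to Solomon, Garland, and Quillen, so there is no in-text argument to compare against. Your inductive proof via a fixed hyperplane $H$ is the standard Quillen-style argument, and its main ingredients are correct: $\mathbf{S}_H$ is contractible because $H$ is a maximal element of it; the poset map $V \mapsto V\cap H$ is $\leq$ the identity and hence deformation retracts the subposet $\mathbf{G} := \{V \in \T_n(K) : V\cap H \neq 0\}$ onto $\mathbf{S}_H$; and the vertices missing from $\mathbf{G}$ are exactly the lines $\ell \not\subset H$, which are pairwise non-adjacent (a flag contains at most one line) and whose links are $\cT(K^n/\ell) \cong \cT_{n-1}(K)$, a wedge of $(n-3)$--spheres by induction. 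Two remarks. First, this decomposition gives the full homotopy type directly, with no need for Hurewicz and Whitehead (nor for the unstated step of checking $H_{n-2}$ is free): since the open stars of the bad lines are pairwise disjoint and $|\mathbf{G}|$ is contractible, collapsing $|\mathbf{G}|$ yields
\[
|\cT_n(K)| \;\simeq\; |\cT_n(K)|\big/|\mathbf{G}| \;\simeq\; \bigvee_{\ell \not\subset H} \Sigma\,|\cT(K^n/\ell)|,
\]
a wedge of $(n-2)$--spheres by the inductive hypothesis. Second, the alternative you float via Quillen's Theorem~A applied to the inclusion $\mathbf{S}_H \hookrightarrow \T_n(K)$ does not work as stated: for a line $\ell \not\subset H$, no element of $\mathbf{S}_H$ is comparable to $\ell$, so both fibers of the inclusion over $\ell$ are empty, and the hypotheses of Theorem~A (and of its connectivity-bounded refinements) fail precisely at the bad vertices. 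Those vertices genuinely require the star/link analysis of your main argument.
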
 

By \autoref{RemarkFrac},  $\cT_n(R)$ is isomorphic to $\cT_n(\mathrm{Frac}(R))$ when $R$ is a Dedekind domain and hence it is also $(n-3)$--connected.

The following is straightforward and is the reason we primarily use $\cB_n$ instead of $\cPB_n$. 

\begin{proposition} \label{B1contractible}
Let $R$ be a PID. Then $\cB_1(R)$ is contractible. 
\end{proposition}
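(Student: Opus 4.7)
The plan is to argue that $\cB_1(R)$ is in fact a single point, and hence obviously contractible. The key observation is that, because $R$ has rank $1$ as an $R$-module, a basis of $R$ consists of a single element, so partial bases have at most one element. This forces $\cPB_1(R)$ to be zero-dimensional.

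More concretely, first I would unpack the definitions: the simplices of $\cPB_1(R)$ are subsets of bases of $R$ as a free $R$-module. Any basis of $R$ has exactly one element $b$, and this element $b$ must generate $R$ as an $R$-module, hence be a unit. Conversely, every unit $u \in R^\times$ gives a basis $\{u\}$. Therefore the set of vertices of $\cPB_1(R)$ is precisely $R^\times$, and there are no simplices of positive dimension.

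Next, I would pass to the quotient $\cB_1(R)$. By definition, two vertices $u, v \in R^\times$ are identified in $\cB_1(R)$ whenever $v = wu$ for some unit $w$. Since any two units differ by multiplication by a unit, all vertices of $\cPB_1(R)$ are identified in $\cB_1(R)$. Hence $\cB_1(R)$ consists of a single vertex and no higher simplices, so it is contractible.

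There is no real obstacle here; the only subtlety worth flagging is that the hypothesis that $R$ is a PID is not strictly used in the argument beyond ensuring the framework of the paper applies (that primitive vectors in $R$ are exactly units, which holds for any commutative ring with $V = R$). The result can thus be stated under milder hypotheses, but the PID assumption is harmless and matches the rest of the section.
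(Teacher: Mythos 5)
Your argument is correct and is the natural one; the paper omits a proof entirely, labeling the statement ``straightforward,'' and what you have written is precisely the intended verification. You correctly identify that $\cB_1(R)$ is a single point: $\cPB_1(R)$ has vertex set $R^\times$ and no higher simplices, and the quotient identifying unit-multiples collapses this to one vertex. Your observation that the PID hypothesis is not actually used is also correct; the paper states the proposition with a PID hypothesis only because that is the setting of the surrounding results, and the conclusion holds for any commutative ring under the paper's conventions.
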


\begin{proposition} \label{B2ConnectedElementary}  If $\GL_2(R)$ is not generated by matrices of the
  form \[\left\{ \begin{bmatrix} u & * \\ 0 & v \end{bmatrix},  \begin{bmatrix}
    u & 0  \\ * & v \end{bmatrix},  \begin{bmatrix} 0 & u  \\ v &
    0 \end{bmatrix} \;  \middle|  \; \text{ $u,v \in R^{\times}$, $* \in R$}
  \right\},\] 
 then the graph $\cB_2(R)$ is
  not connected. 
\end{proposition}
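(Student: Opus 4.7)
The plan is to prove the contrapositive: assuming $\cB_2(R)$ is connected, I will show that $\GL_2(R)$ equals the subgroup $H$ generated by the three allowed matrix types. Fix $g \in \GL_2(R)$; since $ge_1$ is primitive, $[ge_1]$ is a vertex of $\cB_2(R)$, and connectivity supplies an edge-path $[e_1] = [v_0], [v_1], \ldots, [v_k] = [ge_1]$. The strategy is then to lift this combinatorial path to a factorization of $g$ into allowed generators.

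First, I would choose lifts $v_i$ of each vertex, with $v_0 = e_1$ and $v_k = ge_1$, and form the change-of-basis matrices $M_i := [v_i \mid v_{i+1}] \in \GL_2(R)$, which make sense because each consecutive pair is a basis of $R^2$. The main step is an induction showing that each $M_i$ lies in $H$. For the base case, $M_0 = [e_1 \mid v_1]$ has first column $e_1$, and the basis condition forces the bottom-right entry to be a unit; hence $M_0$ is one of the allowed upper-triangular generators. For the inductive step, I would observe that $M_{i-1}^{-1} M_i$ has first column $M_{i-1}^{-1} v_i = e_2$, so it takes the form $\begin{bmatrix} 0 & a \\ 1 & b \end{bmatrix}$ with $a \in R^{\times}$ (forced by invertibility); this factors visibly as $\begin{bmatrix} 0 & 1 \\ 1 & 0 \end{bmatrix}\begin{bmatrix} 1 & b \\ 0 & a \end{bmatrix}$, a product of a swap matrix with an upper-triangular matrix, so it lies in $H$. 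Therefore $M_i = M_{i-1}\cdot(M_{i-1}^{-1} M_i) \in H$.

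To conclude, $ge_1 = v_k$ is the second column of $M_{k-1}$, so $M_{k-1}^{-1} g$ has first column $e_2$; the same factorization as in the inductive step shows $M_{k-1}^{-1} g \in H$, and therefore $g = M_{k-1}\cdot(M_{k-1}^{-1} g) \in H$.

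I do not foresee a significant obstacle: the argument amounts to recognizing that an edge of $\cB_2(R)$ records a change of basis of $R^2$, and that any successive transition along a path is forced into a form captured by the allowed generators. The real content is the simple observation that a matrix in $\GL_2(R)$ whose first column is $e_1$ or $e_2$ is forced to be upper-triangular or swap-times-upper-triangular, each with unit entries where required.
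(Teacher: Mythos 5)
Your argument is correct, and it takes a genuinely different and more elementary route than the paper. The paper proves the statement indirectly: it quotes Church--Farb--Putman for the fact that $\cPB_2(R)$ fails to be Cohen--Macaulay when $\GL_2(R)$ is not generated by elementary and diagonal matrices, checks that the elementary-plus-diagonal generating set generates the same subgroup as the three families in the statement, and then transports non-Cohen--Macaulayness from $\cPB_2(R)$ to $\cB_2(R)$ via Hatcher--Wahl's complete join complex machinery, finally observing that a non-Cohen--Macaulay graph with an edge is disconnected. Your proof instead establishes the contrapositive directly: it lifts an edge-path in $\cB_2(R)$ from $[e_1]$ to $[ge_1]$ to an explicit factorization of $g$ into the allowed generators, by recognizing that each transition matrix $M_{i-1}^{-1}M_i$ has first column $e_2$ and hence is a swap times an allowed upper-triangular matrix. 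What your approach buys is self-containedness and transparency: no appeal to the Cohen--Macaulay property, to CFP's theorem, or to the complete join formalism. What the paper's approach buys is brevity given the surrounding literature, and it fits the paper's broader theme of relating connectivity properties of these complexes to algebraic generation statements.

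One small gap worth a sentence in a polished write-up: your concluding step refers to $M_{k-1}$ and so tacitly assumes $k \geq 1$. When $k=0$ one has $[ge_1] = [e_1]$, so the first column of $g$ is a unit multiple of $e_1$; then $g$ is of the form $\left[\begin{smallmatrix} u & * \\ 0 & v \end{smallmatrix}\right]$ with $u,v$ units (invertibility forces $v$ to be a unit), hence already lies in $H$. With that caveat addressed, the argument is complete.
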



\begin{proof}

Recall that a simplicial complex of dimension $d$ is called Cohen--Macaulay if it is $(d-1)$-connected and if the links of $p$-simplicies are $(d-2-p)$-connected. In particular, a graph of dimension $1$, that is a graph with at least one edge, is Cohen–Macaulay if and only it is connected.

Church--Farb--Putman \cite[proof of Theorem 2.1]{CFP-Integrality} proved that the graph $\cPB_2(R)$ is not Cohen--Macaulay  if $\GL_2(R)$ is not generated by elementary matrices together with diagonal matrices. Let $S_1$ be the set of elementary matrices together with diagonal matrices and let $S_2$ be the set of matrices of the form \[\left\{ \begin{bmatrix} u & * \\ 0 & v \end{bmatrix},  \begin{bmatrix}
    u & 0  \\ * & v \end{bmatrix},  \begin{bmatrix} 0 & u  \\ v &
    0 \end{bmatrix} \;  \middle|  \; \text{ $u,v \in R^{\times}$, $* \in R$}
  \right\}.\] We will show that $S_1$ and $S_2$ generate the same subgroup of $\GL_2(R)$. Since $S_2$ contains $S_1$, it suffices to show that every matrix in $S_2$ can be written as a product of matrices in $S_1$. To show this, we perform the following elementary calculation: 
  \[ \begin{bmatrix} u & * \\ 0 & v \end{bmatrix}=  \begin{bmatrix}
    1 &  */v  \\0 & 1 \end{bmatrix} \begin{bmatrix}
    u& 0  \\ 0 & v \end{bmatrix}, \, \begin{bmatrix} u & 0 \\ * & v \end{bmatrix}=  \begin{bmatrix}
    1 &  0 \\ */u  & 1 \end{bmatrix} \begin{bmatrix}
    u& 0  \\ 0 & v \end{bmatrix}, \] \[\text{and } \begin{bmatrix} 0 & u \\ v & 0 \end{bmatrix}= \begin{bmatrix} 1 & -1 \\ 0 & 1 \end{bmatrix} \begin{bmatrix} 1 & 0 \\ 1 & 1 \end{bmatrix} \begin{bmatrix} 1 & -1 \\ 0 & 1 \end{bmatrix} \begin{bmatrix} 1 &0 \\ 0 & -1 \end{bmatrix}    \begin{bmatrix} v & 0  \\0 & u \end{bmatrix}.\]

   From now on, assume that $\GL_2(R)$ is not generated by these matrices and so $\cPB_2(R)$ is not Cohen--Macaulay. Consider the natural map $\cPB_2(R) \m \cB_2(R)$. This map is a complete join complex in the sense of Hatcher--Wahl \cite[Definition 3.2]{hatcherwahl}. Thus, by \cite[Proposition 3.5]{hatcherwahl},  $\cB_2(R)$ is not Cohen--Macaulay. Because $\cB_2(R)$ has at least one edge, it cannot be connected. \end{proof}

Combining  \autoref{B2ConnectedElementary} with Cohn \cite[Theorem 6.1 and 6.2]{Cohn} implies the following result.

\begin{proposition} \label{B2notCon}
Let $\cO_K$ be a quadratic imaginary number ring that is a PID but is not Euclidean. Then $\cB_2(\cO_K)$ is not connected. 
\end{proposition}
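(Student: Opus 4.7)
The plan is to combine \autoref{B2ConnectedElementary} with a classical theorem of Cohn on the generation of $\GL_2$ of quadratic imaginary rings. The contrapositive of \autoref{B2ConnectedElementary} already reduces the problem to showing that, for the relevant $\cO_K$, the group $\GL_2(\cO_K)$ is \emph{not} generated by the set of matrices
\[\left\{ \begin{bmatrix} u & * \\ 0 & v \end{bmatrix},  \begin{bmatrix} u & 0  \\ * & v \end{bmatrix},  \begin{bmatrix} 0 & u  \\ v & 0 \end{bmatrix} \;  \middle|  \; u,v \in \cO_K^{\times},\ * \in \cO_K \right\}.\]
By the calculation already carried out inside the proof of \autoref{B2ConnectedElementary}, this set generates the same subgroup as the set of elementary matrices together with the diagonal matrices; equivalently, the question is whether $\cO_K$ is a \emph{generalized Euclidean} ring in the sense of Cohn (a $\mr{GE}_2$-ring).

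Next, I would appeal to Cohn's Theorems 6.1 and 6.2, which together classify exactly the quadratic imaginary number rings that are $\mr{GE}_2$. The content of those theorems is that a quadratic imaginary $\cO_K$ is $\mr{GE}_2$ if and only if $\cO_K$ is Euclidean. Hence for $d \in \{-19,-43,-67,-163\}$ --- the quadratic imaginary PIDs that are not Euclidean --- $\GL_2(\cO_K)$ is not generated by elementary and diagonal matrices, and therefore not by the enlarged set displayed above. All that remains is to feed this into \autoref{B2ConnectedElementary} to conclude that $\cB_2(\cO_K)$ is disconnected.

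The argument is essentially bookkeeping: matching the generating set in \autoref{B2ConnectedElementary} with Cohn's $\mr{GE}_2$ generators, and citing the right line in Cohn. The only genuine content is Cohn's theorem itself, which is the place where the arithmetic hypothesis ``PID but not Euclidean'' actually enters; conceptually, Cohn shows that the failure of the Euclidean algorithm for these four rings manifests itself as an explicit $\mr{GE}_2$ obstruction in $\GL_2$. I do not expect any obstacle beyond confirming that the two formulations of the generating set agree, which is already handled inside \autoref{B2ConnectedElementary}.
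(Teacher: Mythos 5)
Your proposal is correct and follows exactly the route the paper takes: the paper's proof of this proposition is nothing more than the observation that \autoref{B2ConnectedElementary} plus Cohn's Theorems 6.1 and 6.2 give the result, and you have simply spelled out the bookkeeping that the paper leaves implicit. In particular, your identification of the displayed generating set with the $\mathrm{GE}_2$ generators (which is already carried out inside the proof of \autoref{B2ConnectedElementary}) and your appeal to Cohn's non-$\mathrm{GE}_2$ result for $d \in \{-19,-43,-67,-163\}$ match the paper's argument precisely.
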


\begin{proposition} \label{split} If $\cPB_n(R)$ is $d$--connected for some ring
  $R$, then so is $\cB_n(R)$.
\end{proposition}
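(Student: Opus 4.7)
The plan is to exploit the fact that the quotient map $\pi \colon \cPB_n(R) \to \cB_n(R)$ admits simplicial lifts. Given a continuous map $f \colon S^k \to |\cB_n(R)|$ with $k \leq d$, after simplicial approximation I may assume $f$ is simplicial with respect to some triangulation $\Sigma$ of $S^k$. The goal is to lift $f$ through $\pi$ to a simplicial map $\tilde f \colon \Sigma \to \cPB_n(R)$, fill in the resulting sphere using the $d$-connectivity of $\cPB_n(R)$, and then push the filling disk back down through $\pi$.

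The key observation is that $\pi$ has the \emph{arbitrary-choice} lifting property on vertices: for each vertex $w$ of $\Sigma$, I may choose \emph{any} primitive vector in the line $f(w) \subseteq R^n$ as $\tilde f(w)$, and the collection of choices automatically extends to a simplicial lift. Indeed, if $\{w_0, \ldots, w_p\}$ spans a simplex of $\Sigma$, then $\{f(w_0), \ldots, f(w_p)\} = \{L_0, \ldots, L_p\}$ is a partial frame, meaning $L_0 \oplus \cdots \oplus L_p$ is a direct summand of $R^n$; picking any primitive generator $v_i$ of each $L_i$ produces a partial basis $\{v_0, \ldots, v_p\}$, hence a simplex of $\cPB_n(R)$. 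So $\tilde f$ is well-defined and simplicial, and $\pi \circ \tilde f = f$ as simplicial maps, hence as continuous maps.

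Once the lift $\tilde f \colon S^k \to |\cPB_n(R)|$ is in hand, the hypothesis that $\cPB_n(R)$ is $d$-connected (combined with $k \leq d$) provides an extension $\tilde F \colon D^{k+1} \to |\cPB_n(R)|$. Composing with $\pi$ yields the desired nullhomotopy $\pi \circ \tilde F \colon D^{k+1} \to |\cB_n(R)|$ of $f$. Since this construction works for every $k$ with $-1 \leq k \leq d$ (the case $k = -1$ being non-emptiness, which follows because $\cPB_n(R)$ is non-empty and $\pi$ is surjective on vertices), this proves $\cB_n(R)$ is $d$-connected.

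There is no real obstacle: every step is formal once the vertex-lifting observation is made. The same conclusion also follows from the complete-join-complex machinery of Hatcher--Wahl \cite[Proposition 3.5]{hatcherwahl}, already invoked in the proof of \autoref{B2ConnectedElementary}, since $\cPB_n(R) \to \cB_n(R)$ is evidently a complete join complex with vertex fiber over a line $L$ equal to the discrete set of primitive generators of $L$; but the direct lifting argument above seems cleaner to record.
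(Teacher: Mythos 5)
Your argument is, at its core, the paper's argument: the paper constructs a simplicial section $s \colon \cB_n(R) \to \cPB_n(R)$ by choosing one primitive vector per line, observes that $s$ forces $\pi_i(|\cPB_n(R)|) \to \pi_i(|\cB_n(R)|)$ to be surjective for all $i$, and the connectivity statement follows immediately. Your lift-then-fill-then-project is the unpacking of that surjectivity on $\pi_k$, and your vertex-lifting is the section composed with $f$.

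One genuine flaw in the way you have written it: the \emph{arbitrary-choice} claim is false as stated. You assert that for each vertex $w$ of $\Sigma$ you may lift to an \emph{independently} chosen primitive generator of $f(w)$ and the result is automatically simplicial. But a simplicial map need not be injective on the vertices of a simplex, and simplicial approximation does not let you assume it is. If $\{w_0,\ldots,w_p\}$ is a simplex of $\Sigma$ with $f(w_0)=f(w_1)=L$, and you choose distinct primitive generators $\tilde f(w_0)\neq\tilde f(w_1)$ of $L$, then $\{\tilde f(w_0),\ldots,\tilde f(w_p)\}$ contains two unit-multiples of the same vector and hence is not a partial basis, so $\tilde f$ is not simplicial. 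The fix is exactly the paper's move: let the choice depend only on the line, i.e., fix a section $s$ on vertices once and for all and set $\tilde f = s\circ f$. With that correction your argument is complete; your Hatcher--Wahl alternative via \cite[Proposition 3.5]{hatcherwahl} is also correct and unaffected by this issue.
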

\begin{proof} Consider the natural projection 
$\cPB_n(R) \m \cB_n(R)$. We can construct a splitting $\cB_n(R) \m \cPB_n(R)$ by choosing a primitive vector $v$ for each line in $R^n$. Hence $\pi_i(|\cPB_n(R)|) \m \pi_i(|\cB_n(R)|)$ is surjective for all $i$.
\end{proof}

Combining this with a result of van der Kallen \cite[Theorem 2.6 (i)]{vdK} gives the following corollary. 

\begin{corollary}\label{vdKB}
Let $R$ be a PID. Then $\cB_n(R)$ is $(n-3)$--connected. 
\end{corollary}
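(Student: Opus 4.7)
The plan is simply to stitch together the two cited inputs. First I would invoke van der Kallen's Theorem 2.6(i) from \cite{vdK}, which for a ring whose stable range is small enough gives that the complex of partial bases $\cPB_n(R)$ is $(n-3)$-connected. A PID has the smallest possible stable range, so the hypotheses of van der Kallen's theorem are satisfied, and therefore $\cPB_n(R)$ is $(n-3)$-connected for every $n$.

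Second, I would apply \autoref{split} to this connectivity statement: the natural projection $\cPB_n(R) \to \cB_n(R)$ admits a section (pick a primitive generator of each line in $R^n$), so if the source is $(n-3)$-connected then the target is as well. Combining these two steps yields the conclusion that $\cB_n(R)$ is $(n-3)$-connected.

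The only substantive point is making sure van der Kallen's numerical hypothesis on the stable range is met; once that is verified for a PID, everything else is a formal consequence of \autoref{split}. There is no real obstacle beyond citing the correct statements in the correct form.
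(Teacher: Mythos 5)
Your proof is correct and follows exactly the paper's argument: verify van der Kallen's stable-range hypothesis for PIDs (the paper notes PIDs satisfy Bass's condition SR$_3$), apply \cite[Theorem 2.6(i)]{vdK} to get $(n-3)$-connectivity of $\cPB_n(R)$, then transfer to $\cB_n(R)$ via \autoref{split}. Nothing more to add.
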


\begin{proof} 
Since PIDs satisfy the Bass Stable Range condition SR$_3$, the work of van der Kallen \cite[Theorem 2.6 (i)]{vdK} implies $\cPB_n(R)$ is $(n-3)$--connected. The claim now follows from \autoref{split}.
\end{proof}

\begin{corollary} \label{primecon}
Let $R$ be a PID. Then $\cB_n'(R)$ is $(n-3)$--connected. 
\end{corollary}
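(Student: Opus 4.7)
The plan is to deduce Corollary \ref{primecon} as an immediate formal consequence of Corollary \ref{vdKB}, together with a standard fact about skeleta of CW complexes. By definition, $\cB_n'(R)$ is the $(n-2)$-skeleton of $\cB_n(R)$. Cellular approximation shows that for any CW complex $\mathcal X$ and any non-negative integer $k$, the inclusion of the $k$-skeleton induces an isomorphism $\pi_i(\mathcal X^{(k)}) \cong \pi_i(\mathcal X)$ for every $i < k$ (and a surjection for $i = k$), since any map of $S^i$ or $D^{i+1}$ into $\mathcal X$ can be cellularly approximated into $\mathcal X^{(k)}$ when $i+1 \leq k$.

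I would apply this with $\mathcal X = \cB_n(R)$ and $k = n-2$. For every $i$ in the range $0 \leq i \leq n-3$, we have $i < n-2 = k$, so $\pi_i(\cB_n'(R)) \cong \pi_i(\cB_n(R))$. Corollary \ref{vdKB} tells us that $\cB_n(R)$ is $(n-3)$-connected, so $\pi_i(\cB_n(R)) = 0$ for all such $i$. Combining these two facts gives $\pi_i(\cB_n'(R)) = 0$ for $0 \leq i \leq n-3$, which is precisely the assertion that $\cB_n'(R)$ is $(n-3)$-connected. The edge case $n = 2$ just asserts that $\cB_2'(R)$ is non-empty, which follows because $\cB_2(R)$ has vertices (primitive vectors up to units always exist in $R^2$) and $\cB_2'(R)$ is its $0$-skeleton.

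I do not anticipate any real obstacle: the substantive content has already been established in Corollary \ref{vdKB}, and the present corollary is a formal consequence of passing to the $(n-2)$-skeleton, which is precisely the dimension at which no homotopy below degree $n-2$ is lost.
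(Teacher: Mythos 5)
Your proposal is correct and takes essentially the same approach as the paper: the paper's one-line proof also observes that $\cB_n'(R)$ is the $(n-2)$-skeleton of $\cB_n(R)$, invokes \autoref{vdKB} for the $(n-3)$-connectivity of the latter, and then cites simplicial approximation (the simplicial-complex analogue of the cellular approximation you use) to transfer connectivity to the skeleton.
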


\begin{proof}
The complex $\cB_n'(R)$ is the $(n-2)$--skeleton of $\cB_n(R)$, which is $(n-3)$--connected. The claim follows from simplicial approximation. 
\end{proof}

\section{The map of posets spectral sequence}

In this section we recall a useful spectral sequence arising from maps of posets. 

\subsection{Homology of posets} \hfill

 
We begin by defining the homology of a poset with coefficients in a functor $F$. 
 
  \begin{definition}  Given a poset $\mathbf{Y}$, and a functor $F$ from the poset $\mathbf{Y}$ (viewed as a category) to the category $\Ab$  of abelian groups, define the chain groups
  \[C_p(\mathbf{Y}; F) := \bigoplus_{y_0<\cdots<y_p \in \mathbf{Y}} F(y_0)\]
  and differential given by the alternating sum of the face maps
  \begin{align*}  & d_i\colon \bigoplus_{y_0<\cdots<y_p } F(y_0) \longrightarrow \bigoplus_{y_0<\cdots < \hat{y_i}< \cdots <y_p } F(y_0)  \qquad \qquad (0<i \leq p) \\
  & d_0\colon \bigoplus_{y_0<\cdots<y_p } F(y_0) \longrightarrow \bigoplus_{y_1< \cdots <y_p } F(y_1).  
  \end{align*}
  Here, the map $d_i$ with $(i \neq 0)$ maps the summand indexed by $(y_0<\cdots<y_p)$ to the summand indexed by $(y_0<\cdots < \hat{y_i}< \cdots <y_p)$, and acts by the identity on the group $F(y_0)$. The map $d_0$ maps the summand indexed by $(y_0<\cdots<y_p)$ to the summand indexed by $(y_1< \cdots <y_p)$, and the map $F(y_0) \to F(y_1)$ is the image of the morphism $y_0<y_1 \in \mathbf{Y}$ under the functor $F$.    \end{definition}
   
If  $F=\Z$ is the constant functor with identity maps, then $H_*(\mathbf{Y}; \Z)$ coincides with the homology groups $H_*(|\mathbf{Y}|)$. 

The following lemma is adapted from Charney \cite[Lemma 1.3]{Charney-Generalization}. 
   
   \begin{lemma} \label{ChainsSingleSupport} Suppose that $F\colon \mathbf{Y} \to \Ab$ is a functor supported on elements of height $m$. Then 
   \[H_p(\mathbf{Y};F) = \bigoplus_{\hgt(y_0)=m} \widetilde{H}_{p-1}(\mathbf{Y}_{>y_0}; F(y_0)).\]
   \end{lemma}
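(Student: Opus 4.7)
The plan is to exhibit a degree-shifted isomorphism of chain complexes
\[
C_p(\mathbf{Y}; F) \;\cong\; \bigoplus_{\hgt(y_0)=m}\widetilde{C}_{\,p-1}\bigl(\mathbf{Y}_{>y_0};\,F(y_0)\bigr),
\]
after which passing to homology yields the stated formula. To set this up, I would first decompose each chain group $C_p(\mathbf{Y};F)$ by the minimum element $y_0$ of the defining chain. Because $F$ is supported on height $m$, only chains with $\hgt(y_0)=m$ contribute; and for each such $y_0$, the continuations $y_0 < y_1 < \cdots < y_p$ are in bijection with $(p-1)$-simplices of $\Delta(\mathbf{Y}_{>y_0})$. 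Under the convention that the empty chain in $\mathbf{Y}_{>y_0}$ corresponds to the augmentation group $\widetilde{C}_{-1} = F(y_0)$, this identifies the groups levelwise.

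The key step is to verify that this decomposition is respected by the differential. For $i \geq 1$, the face map $d_i$ preserves $y_0$ and, after reindexing, acts as the simplicial face map on $\mathbf{Y}_{>y_0}$ that drops the $(i{-}1)$st vertex of $y_1 < \cdots < y_p$. The face map $d_0$ is the only one that would leave the $y_0$-summand, since it projects $F(y_0) \to F(y_1)$ via the structure morphism of $F$; but $\hgt(y_1) > m$ forces $F(y_1)=0$, so $d_0 = 0$. Hence the decomposition is one of chain complexes, and on each summand the induced differential agrees (up to an overall sign) with the reduced simplicial boundary of $\mathbf{Y}_{>y_0}$ with constant coefficients $F(y_0)$. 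In particular, the extra map $C_0 \to \widetilde{C}_{-1} = F(y_0)$ arising from $d$ applied to chains of the form $y_0 < y_1$ is (up to sign) the usual augmentation.

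The argument is essentially bookkeeping, so I do not anticipate a real obstacle. The only points worth checking carefully are: (a) the sign conventions when reindexing the face maps $d_i$ on $\mathbf{Y}$ to simplicial face maps on $\mathbf{Y}_{>y_0}$, and (b) the edge case where $y_0$ is maximal in $\mathbf{Y}$, so $\mathbf{Y}_{>y_0}$ is empty; here both sides still agree, as the $y_0$-summand of $C_0(\mathbf{Y};F)$ is $F(y_0)$ and receives no differentials, matching $\widetilde{H}_{-1}(\emptyset;F(y_0)) = F(y_0)$.
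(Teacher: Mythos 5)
Your proof is correct and takes essentially the same approach as the paper: a levelwise identification $C_p(\mathbf{Y};F) \cong \bigoplus_{\hgt(y_0)=m}\widetilde{C}_{p-1}(\mathbf{Y}_{>y_0};F(y_0))$ which is then checked to be a chain isomorphism. You make explicit the differential compatibility (in particular, that $d_0$ vanishes since $F(y_1)=0$ for any $y_1>y_0$ with $\hgt(y_0)=m$, and that the remaining $d_i$ reindex to the reduced boundary with the augmentation appearing at the bottom), which the paper's proof only asserts.
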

   
   \begin{proof} Suppose that $F\colon \mathbf{Y} \to \Ab$ is supported on elements of height $m$. \\[-1.5em]
   \begin{align*}
  C_p(\mathbf{Y}; F) = \bigoplus_{y_0<\cdots<y_p \in \mathbf{Y}} F(y_0) &\cong \bigoplus_{\hgt(y_0)=m} \left( F(y_0) \otimes_{\Z} \bigoplus_{y_0<\cdots<y_p} \Z \right) \\
  = \bigoplus_{\substack{y_0<\cdots<y_p \in \mathbf{Y} \\ \hgt(y_0)=m}} F(y_0) 
  &  \cong \bigoplus_{\hgt(y_0)=m} \Big( F(y_0) \otimes_{\Z} \widetilde{C}_{p-1}(\mathbf{Y}_{>y_0}; \Z) \Big).
   \end{align*}
The composition of these isomorphisms is  compatible with the
   differentials and hence gives an isomorphism of chain
   complexes. Thus \[H_p(\mathbf{Y}; F) = \bigoplus_{\hgt(y_0)=m}
   \widetilde{H}_{p-1}(\mathbf{Y}_{>y_0};  F(y_0)). \qedhere\]
   \end{proof}
   

\subsection{The spectral sequence for a map of posets}

\hfill 

Given a map of posets $f\colon \mathbf{X} \to \mathbf{Y}$, there is an associated spectral sequence introduced by Quillen \cite[Section 7]{Quillen-Poset}; see also Charney \cite[Section 1]{Charney-Generalization}.

\begin{definition}  Let $f\colon \mathbf{X} \to \mathbf{Y}$ be a map of posets.  For $y \in \mathbf{Y}$, define $f\backslash y \subseteq \mathbf{X}$ to be the subposet of elements whose images in $\mathbf{Y}$ are less than or equal to $y$: \[f\backslash y := \{ x \in \mathbf{X} \; | \; f(x) \leq y \}.\]
\end{definition}

\begin{theorem}Given a map of posets  $f\colon \mathbf{X} \to \mathbf{Y}$, there is a spectral sequence 
\[ E^2_{p,q} = H_p\Big(\mathbf{Y};  [y \mapsto H_q(f \backslash y)]\Big) \qquad \implies \qquad H_{p+q}(\mathbf{X}).  \]
\end{theorem}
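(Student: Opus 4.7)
The plan is to produce the desired spectral sequence from the two filtrations of a single double complex. Define
\[ C_{p,q} := \bigoplus_{y_0 < \cdots < y_p \in \mathbf{Y}} C_q(f\backslash y_0), \]
where $C_*(f\backslash y_0)$ is the simplicial chain complex of the nerve of the subposet $f\backslash y_0 \subseteq \mathbf{X}$. Take the vertical differential to be the simplicial boundary on each summand, and the horizontal differential to be the differential of $C_*(\mathbf{Y};F)$ from the previous subsection applied to the functor $F\colon y \mapsto C_q(f\backslash y)$, whose structure maps are induced by the inclusions of subposets $f\backslash y_0 \hookrightarrow f\backslash y_1$ for $y_0 \leq y_1$.

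Taking vertical homology first is immediate: the $E^1$-page becomes $C_p(\mathbf{Y}; y \mapsto H_q(f\backslash y))$, so
\[ E^2_{p,q} = H_p\big(\mathbf{Y}; y \mapsto H_q(f\backslash y)\big), \]
which is the claimed $E^2$-page. For the other spectral sequence, I would reindex the sum: for fixed vertical degree $q$ and a fixed $q$-simplex $\sigma = (x_0 < \cdots < x_q)$ of $\mathbf{X}$, the condition $f(x_q) \leq y_0$ identifies the contribution of $\sigma$ to $C_{*,q}$ with the chain complex $C_*(\mathbf{Y}_{\geq f(x_q)}; \mathbb{Z})$, where $\mathbf{Y}_{\geq y} := \{y\} \cup \mathbf{Y}_{>y}$. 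Each such subposet has a minimum element, so its nerve is a cone and therefore contractible. Hence horizontal homology vanishes for $p > 0$ and equals $\bigoplus_\sigma \mathbb{Z} = C_q(\mathbf{X})$ in horizontal degree zero; a direct check identifies the induced $d^1$ with the simplicial boundary on $\mathbf{X}$, so this spectral sequence collapses to $H_*(\mathbf{X})$. Since both spectral sequences converge to the homology of the total complex, we obtain the stated abutment.

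The main place to be careful is verifying that $d^1$ on the horizontal-first page matches the standard simplicial boundary on $\mathbf{X}$. The canonical representative of the class of $\sigma$ in horizontal $H_0$ sits in the summand with $y_0 = f(x_q)$, but applying the vertical boundary can produce terms whose top vertex $x_{q-1}$ satisfies $f(x_{q-1}) < f(x_q)$; moving these back to their canonical representatives requires the cone-point contracting homotopy of $\mathbf{Y}_{\geq f(x_{q-1})}$, and the signs must be tracked to confirm they assemble into the ordinary alternating sum.
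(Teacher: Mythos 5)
Your proposal is correct, and it takes a more elementary and explicit route than the paper's. The paper does not give a proof of this theorem: it cites Quillen and Charney and simply remarks that the spectral sequence is an instance of the Grothendieck spectral sequence for the composition of $F \mapsto [y \mapsto H_0(f\backslash y; F)]$ with $F' \mapsto H_0(\mathbf{Y}; F')$, applied to the constant functor $\underline{\Z}$. Your argument replaces that formalism with a concrete first-quadrant double complex $C_{p,q} = \bigoplus_{y_0 < \cdots < y_p} C_q(f\backslash y_0)$ and compares its two filtration spectral sequences directly. This is self-contained (no appeal to derived-functor machinery or Cartan--Eilenberg resolutions), and it is in fact the canonical double-complex realization that the Grothendieck machinery produces here; both routes reach the same statement.

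One remark on your closing paragraph: the sign worry for $d^1$ is a non-issue. The identification of the horizontal $H_0$ of the $q$-th row with $C_q(\mathbf{X})$ is as a cokernel: the relations imposed by the faces $d_0, d_1$ already identify $\sigma$ sitting in the summand indexed by any $y_0 \geq f(x_q)$ with $\sigma$ in any other such summand, so the class $[\sigma]$ in $H_0^h$ does not depend on a chosen representative and no contracting homotopy is needed. Applying the vertical boundary to any lift $(\sigma, y)$ gives $\sum_i (-1)^i(\sigma_i, y)$, a legitimate element of $C_{0,q-1}$ since $f(\text{top}(\sigma_i)) \leq f(x_q) \leq y$, and its image in the cokernel is $\sum_i (-1)^i[\sigma_i]$. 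The $d^1$ is the ordinary simplicial boundary by inspection, with no corrections to track.
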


This spectral sequence is an instance of the Grothendieck spectral sequence for the composition of functors:
    \begin{align*}
 \mathrm{Fun}( \mathbf{X} , \Ab) &\longrightarrow \mathrm{Fun}( \mathbf{Y}, \Ab) \\ 
F & \longmapsto [y \mapsto H_0(f \backslash y; F)] \\[.5em]
\mathrm{Fun}( \mathbf{Y}, \Ab) &\longrightarrow \Ab \\ 
F' & \longmapsto H_0( \mathbf{Y}, F'). 
\end{align*}

    \section{Non-integrality} 
\label{sec4}
In this section, we use the map-of-posets spectral sequence and connectivity results to prove that the Steinberg modules of quadratic imaginary PIDs which are not Euclidean are not generated by integral apartment classes. We begin by relating the complex of partial frames to integral apartment classes. 

Note that $H_{n-1}(\cB_n,\cB_n')$ is the free abelian group on the set of $(n-1)$-simplices of $\cB_n$. In other words, $H_{n-1}(\cB_n,\cB_n')$ is isomorphic to the quotient of the free abelian group on symbols $(F_1,\ldots,F_n)$ with $F_i$ rank-one free submodules of $R^n$, with $R^n = F_1 \oplus \cdots \oplus F_n$, modulo the relation \[(F_1,\ldots,F_n)=\sgn(\sigma)(F_{\sigma(1)},\ldots, F_{\sigma(n)}), \qquad \qquad \sigma \text{ a permutation of $n$, $\sgn(\sigma)$ the sign of $\sigma$}.\] There is a map $\alpha\colon H_{n-1}(\cB_n,\cB_n') \m H_{n-2}(\T_n)=\St_n(R)$ sending $(F_1,\ldots,F_n)$ to \[\sum_{\sigma \in S_n} \sgn(\sigma) \Big(F_{\sigma(1)} \subset F_{\sigma(1)} \oplus F_{\sigma(2)} \subset \cdots \subset F_{\sigma(1)} \oplus \cdots \oplus F_{\sigma(n-1)} \Big).\] The image of $\alpha$ is the submodule of $\St_n(R)$ generated by integral apartment classes. In particular, $\alpha$ is surjective if and only if the Steinberg module is generated by integral apartment classes.

For the rest of this section, we will  study the spectral sequence associated to the map of posets
\begin{align*}
f\colon \B'_n & \longrightarrow \T_n \\ 
\{ v_0,  \ldots,  v_p\} & \longmapsto  \mathrm{span}_R(v_0, \ldots, v_p).
\end{align*}    
Throughout the section we let $E^r_{p,q}$ denote this  spectral sequence (with implicit dependence on a fixed choice of $n$).  

Observe that, for $V \in \T_n$, the subposet $f\backslash V$ is precisely $\B(V)$. Thus the spectral sequence associated to $f$ satisfies
\[ E^2_{p,q} = H_p\Big(\T_n;  [V \mapsto H_q(\B(V))]\Big) \qquad \implies \qquad  H_{p+q}(\B'_n).  \]

We will use the following lemma to further describe the $E^2$ page of the spectral  sequence in \autoref{E2/Einfty}. 

   \begin{lemma}\label{H(T_n,tilde H_0)}  Let $R$ be a PID. Then $H_{p}(\T_n; \widetilde H_0(\B(-))) \cong 0$ unless $p = n-3$, when
\[ H_{n-3}(\T_n; \widetilde H_0(\B(-))) \cong  \bigoplus_{\substack{V \subseteq R^n \\ \rank(V)=2}} \St_{n-2} \otimes \widetilde H_0(\B_2).\]
In particular, if $\B_2$ is not connected and $n \geq 3$, the group $ H_{n-3}(\T_n; \widetilde H_0(\B(-))) $ is not zero.
\end{lemma}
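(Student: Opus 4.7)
The strategy is to identify the support of the coefficient functor $F\colon V \mapsto \widetilde H_0(\B(V))$ on $\T_n$ and then apply \autoref{ChainsSingleSupport}.

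First I would show that $F$ is supported precisely on the rank-$2$ summands of $R^n$. Every $V \in \T_n$ has rank between $1$ and $n-1$. If $\rank(V) = 1$, then $\cB(V) \cong \cB_1(R)$ is contractible by \autoref{B1contractible}, so $\widetilde H_0(\B(V)) = 0$. If $\rank(V) \geq 3$, then \autoref{vdKB} implies $\cB(V)$ is at least $0$-connected, so again $\widetilde H_0(\B(V)) = 0$. Since a rank-$r$ summand has height $r-1$ in $\T_n$ (a maximal chain with supremum $V$ is a flag $L_1 < L_1 \oplus L_2 < \cdots < V$ of length $r$), the support lies at height $m = 1$, and \autoref{ChainsSingleSupport} then gives
\[ H_p\bigl(\T_n;\widetilde H_0(\B(-))\bigr) \;\cong\; \bigoplus_{\substack{V\subseteq R^n\\ \rank(V)=2}} \widetilde H_{p-1}\bigl((\T_n)_{>V};\,\widetilde H_0(\B(V))\bigr). \]

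Next I would identify $(\T_n)_{>V}$ with the Tits poset of the rank-$(n-2)$ free module $R^n/V$: summands of $R^n$ strictly containing $V$ are in bijection with nonzero summands of $R^n/V$ via the quotient map. By \autoref{RemarkFrac}, the realization of this poset is homeomorphic to $\cT_{n-2}(K)$ for $K = \mathrm{Frac}(R)$, which by \autoref{SolomonTits} is a wedge of $(n-4)$-spheres whose top reduced homology is $\St_{n-2}$ by definition (with the convention $\widetilde H_{-1}(\emptyset) = \Z = \St_1$ covering the case $n = 3$). Since $\widetilde H_0$ of any space is free abelian, the universal coefficient theorem gives $\widetilde H_{p-1}((\T_n)_{>V}; \widetilde H_0(\B_2)) = 0$ unless $p-1 = n-4$, in which case it equals $\St_{n-2} \otimes \widetilde H_0(\B_2)$. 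This yields the displayed isomorphism for $p = n-3$ and the vanishing for other $p$.

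For the final claim, assuming $\B_2$ is disconnected and $n \geq 3$, the index set of rank-$2$ summands of $R^n$ is nonempty, $\St_{n-2}$ is a nonzero free abelian group, and $\widetilde H_0(\B_2) \neq 0$ by hypothesis, so each nonzero summand of the direct sum is a nonzero tensor product of free abelian groups. I do not anticipate any substantive obstacle; the argument is essentially a bookkeeping step combining the connectivity bounds \autoref{B1contractible} and \autoref{vdKB}, the Solomon--Tits theorem \autoref{SolomonTits}, the identification of upward-closed subposets of $\T_n$ with Tits buildings of quotients, and the structural formula \autoref{ChainsSingleSupport}.
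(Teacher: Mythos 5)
Your proposal is correct and follows essentially the same route as the paper's proof: identify that $\widetilde H_0(\B(-))$ is supported on rank-$2$ summands via \autoref{B1contractible} and \autoref{vdKB}, apply \autoref{ChainsSingleSupport}, identify $(\T_n)_{>V}$ with $\T(R^n/V)$, and invoke \autoref{SolomonTits} for sphericity. The only (harmless) cosmetic differences are that you spell out the height computation and the universal-coefficients step that the paper leaves implicit.
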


\begin{proof}
The functor $\widetilde H_0(\B(-)) \colon \T_n \m \Ab$ is zero except possibly on submodules $V \subseteq R^n$ of rank $2$ by \autoref{B1contractible} and \autoref{vdKB}. Then by \autoref{ChainsSingleSupport}, we find
\[H_{p}(\T_n; \widetilde H_0(\B(-))) =  \bigoplus_{\substack{V \subseteq R^n \\ \rank(V)=2}} \widetilde{H}_{p-1}\left(\T(R^n/V);  \widetilde{H}_0(\B(V))  \right). \] 
Then $\T(R^n/V)$ is spherical of dimension $(n-2)-2$ by the Solomon--Tits theorem, and we find 
\[H_{p}(\T_n; \widetilde H_0(\B(-))) = 
\begin{cases}
  \bigoplus_{\substack{V \subseteq R^n \\ \rank(V)=2}}
  \St(R^n/V) \otimes  \widetilde H_0(\B(V)) & \text{if $p=n-3$,} \\ 
0 & \text{otherwise}.  \end{cases} \]
In order to see that $H_{n-3}(\T_n; \widetilde H_0(\B(-))) $ is not zero for $n\ge 3$, recall that $\St_n$ is non-zero for $n\ge 1$. In particular, $\T_1$ is empty so $\St_1 \cong \widetilde H_{-1}(\T_1) \cong \Z$.
\end{proof}

We now establish some features of the spectral sequence $E^r_{p,q}$. 

\begin{proposition} \label{E2/Einfty} Let $n \geq 3$, $R$ be a PID, and $E^r_{p,q}$ denote the spectral sequence associated to the map of posets $f\colon \B'_n \to \T_n$. Then 
\begin{enumerate}[label=(\roman*)]
\item \label{E-infty}  $ E^\infty_{p,q} \cong 0$ unless $p+q = n-2$ or $(p,q)=(0,0)$, 
\item \label{E2} $ E^2_{p,q} \cong 0$ unless $p+q = n-2$, $p+q = n-3$, or $(p,q)=(0,0)$,
\item \label{E2corner} $E^2_{n-3,0} \cong 0$ when $n>3$, and $E^2_{n-3,0} \cong \Z$ when $n=3$.
\end{enumerate}
\end{proposition}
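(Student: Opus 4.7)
The plan is to establish part (i), then part (ii), and finally deduce part (iii) via a spectral-sequence corner argument.

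For part (i), I would use that the spectral sequence converges to $H_{p+q}(\B'_n)$. By \autoref{primecon}, $\cB_n'$ is $(n-3)$-connected, and it has dimension $n-2$, so $H_i(\B'_n) = 0$ for $i \notin \{0, n-2\}$ when $n \geq 3$. Since each $E^\infty_{p,q}$ is a subquotient of $H_{p+q}(\B'_n)$, it vanishes unless $p+q \in \{0, n-2\}$, and $(0,0)$ is the only first-quadrant lattice point with $p+q=0$.

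For part (ii), the key input is that $H_q(\B(V)) = 0$ unless $q = 0$ or $q \in \{\rank V - 2, \rank V - 1\}$; this follows from \autoref{vdKB} together with $\dim \cB(V) = \rank V - 1$. When $q = 0$, the functor $\mathcal{F}_0 = H_0(\B(-))$ fits in a short exact sequence of functors $0 \to \widetilde H_0(\B(-)) \to \mathcal{F}_0 \to \Z \to 0$ with $\Z$ the constant functor. By \autoref{H(T_n,tilde H_0)} the kernel contributes to $H_\bullet(\T_n; -)$ only in degree $n-3$, and by Solomon--Tits the quotient $\Z$ contributes only in degrees $0$ and $n-2$, so the induced long exact sequence forces $E^2_{p, 0} = 0$ for $p \notin \{0, n-3, n-2\}$. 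When $q \geq 1$, $\mathcal{F}_q$ is supported on summands of rank $q+1$ and $q+2$. At the chain level of $C_\bullet(\T_n; \mathcal{F}_q)$, the subgroup $B_\bullet$ generated by chains $V_0 < \cdots < V_p$ with $\rank V_0 = q+2$ forms a subcomplex (the $d_0$ of such a chain lands in a summand indexed by $V_1$ with $\rank V_1 > q+2$, where $\mathcal{F}_q$ vanishes). Letting $\mathcal{F}_q^{(k)}$ denote the functor that agrees with $\mathcal{F}_q$ at rank $k$ and is zero elsewhere (a well-defined functor, since same-rank direct summands of $R^n$ are pairwise incomparable, forcing all structural maps of $\mathcal{F}_q^{(k)}$ to vanish), I would identify $B_\bullet \cong C_\bullet(\T_n; \mathcal{F}_q^{(q+2)})$ and the quotient $C_\bullet(\T_n; \mathcal{F}_q)/B_\bullet \cong C_\bullet(\T_n; \mathcal{F}_q^{(q+1)})$; the latter identification uses that the induced $d_0$ vanishes modulo $B_\bullet$. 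Applying \autoref{ChainsSingleSupport} and Solomon--Tits then shows $H_p(\T_n; \mathcal{F}_q^{(q+1)})$ is concentrated at $p = n-q-2$ (on the diagonal $p+q = n-2$) and $H_p(\T_n; \mathcal{F}_q^{(q+2)})$ at $p = n-q-3$ (on $p+q = n-3$). The long exact sequence of the short exact sequence of chain complexes then confines $E^2_{p,q}$ to $p+q \in \{n-2, n-3\}$.

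For part (iii), I would argue by cornering. When $n > 3$, every outgoing $d^r$ from $E^r_{n-3, 0}$ targets bidegree $(n-3-r, r-1)$ with $p+q = n-4$, which vanishes by part (ii); incoming $d^r$'s arrive from bidegree $(n-3+r, 1-r)$ with $1-r \leq -1$, hence vanish as well. Thus $E^2_{n-3, 0} = E^\infty_{n-3, 0}$, which vanishes by part (i) since $n-3 \notin \{0, n-2\}$. When $n = 3$, $E^2_{n-3, 0} = E^2_{0, 0}$ again has trivial incoming and outgoing differentials, so it equals $E^\infty_{0,0}$; being the unique $E^\infty$ contribution to $H_0(\B'_3)$, it equals $H_0(\B'_3) = \Z$ (using that $\cB'_3$ is connected by \autoref{primecon}).

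The principal obstacle is the chain-level analysis in the $q \geq 1$ case of part (ii), specifically the identification $C_\bullet(\T_n; \mathcal{F}_q)/B_\bullet \cong C_\bullet(\T_n; \mathcal{F}_q^{(q+1)})$, which requires careful bookkeeping of the face maps and their vanishing under the quotient.
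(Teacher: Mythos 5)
Your proposal is correct and follows essentially the same strategy as the paper's proof: part (i) via convergence and connectivity of $\cB'_n$, part (iii) by a corner argument deduced from (i) and (ii), and part (ii) by splitting the coefficient functor $H_q(\B(-))$ into pieces supported at a single rank and applying \autoref{ChainsSingleSupport} together with Solomon--Tits. The only cosmetic difference is in how you package the $q\geq 1$ case of part (ii): you work directly with the subcomplex $B_\bullet$ of $C_\bullet(\T_n;\mathcal{F}_q)$ and its quotient, whereas the paper phrases the same decomposition as a short exact sequence of functors $0 \to F' \to H_q(\B(-)) \to F'' \to 0$ with $F'$ supported on rank $q+2$ and $F''$ on rank $q+1$; applying $C_\bullet(\T_n;-)$ to that functor-level sequence produces exactly your $B_\bullet \hookrightarrow C_\bullet \twoheadrightarrow C_\bullet/B_\bullet$, so the two are formally identical, and your careful check that $d_0$ vanishes on $B_\bullet$ and modulo $B_\bullet$ is precisely the verification that the maps in the paper's sequence are natural transformations of functors on $\T_n$.
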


The spectral sequence is illustrated in \autoref{FigureE2/Einfty}. 

 \begin{figure}[h!]    \hspace{-0cm} \hspace{-1.2cm} 
\begin{center}  \begin{tikzpicture} \scriptsize
  \matrix (m) [matrix of math nodes,
    nodes in empty cells,nodes={minimum width=3ex,
    minimum height=3ex,outer sep=2pt},
 column sep=3ex,row sep=3ex]{  
  6    & 0 &  &  & & &  &  \\   
  5    &\bigstar & 0 && &  &  &   \\ 
 4    & \bigstar & \bigstar & 0 &  &  &   &  \\   
 3    & 0& \bigstar & \bigstar & 0 & &   & \\  
 2   & 0 & 0 & \bigstar & \bigstar & 0 &  &  \\            
1     & 0 & 0 & 0 & \bigstar& \bigstar & 0 &  \\        
 0     & \bigstar & 0 & 0 & 0&|[draw=red, circle]|0& \bigstar& 0  \\       
 \quad\strut &     0  &  1  & 2  & 3 & 4 &5 &6  \\}; 
 \draw[thick] (m-1-1.east) -- (m-8-1.east) ;
 \draw[thick] (m-8-1.north) -- (m-8-8.north east) ;

    \begin{pgfonlayer}{background}
\draw[rounded corners, draw=none, fill=black!60!white, inner sep=3pt,fill opacity=0.25]
   (m-3-2.north)  -| (m-3-2.south east) 
    -| (m-4-3.north east) -| (m-4-3.south east)
    -| (m-5-4.north east) -| (m-5-4.south east)
     -| (m-6-5.north east) -| (m-6-5.south east)
     -| (m-7-6.north east) -| (m-7-6.south east)
      -| (m-7-3.north west)
      -| (m-3-2.north west) -| (m-3-2.north);
    \end{pgfonlayer}

        \begin{pgfonlayer}{background}
\draw[rounded corners, draw=none, fill=black!60!white, inner sep=3pt,fill opacity=0.25]
   (m-1-2.north) -| (m-1-8.north)
      -| (m-7-8.north east) -| (m-7-8.south east)
       -| (m-7-8.south west) -| (m-7-8.north west)
       -| (m-6-7.south west) -| (m-6-7.north west)
     -| (m-5-6.south west) -| (m-5-6.north west)
      -| (m-4-5.south west) -| (m-4-5.north west)
        -| (m-3-4.south west) -| (m-3-4.north west)
               -| (m-2-3.south west) -| (m-2-3.north west)
                      -| (m-1-2.south west) -| (m-1-2.north west)
       ;
    \end{pgfonlayer}

\end{tikzpicture}
\end{center}
\caption{The page $E^2_{p,q}$ when $n=7$. The term $E^2_{n-3,0}$ is circled in red. Terms that are shaded grey converge to zero.}
\label{FigureE2/Einfty}
\end{figure}
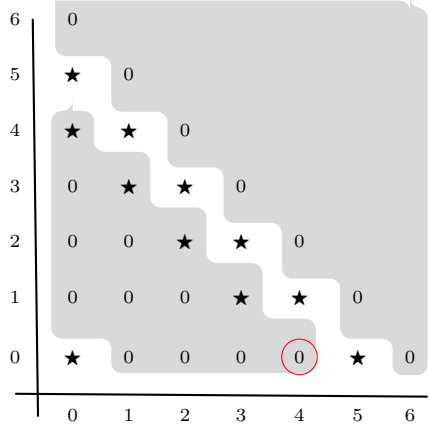

\begin{proof} Since the spectral sequence converges to $H_{p+q}(\B'_n)$ and $\dim(\B'_n) = n-2$, part \ref{E-infty} follows from the fact that $\B_n'(R)$ is $(n-3)$--connected by \autoref{primecon}. Part \ref{E2corner} follows from parts \ref{E-infty}  and \ref{E2}:  part \ref{E2} implies that  for $r\geq 2$ there are no non-trivial differentials to or from the group $E^r_{n-3, 0}$ (see \autoref{E^2_{n-3,0}}), so $E^2_{n-3, 0} = E^{\infty}_{n-3, 0}$.   
 \begin{figure}[h!]    \hspace{-0cm} \hspace{-1.2cm} 
\begin{tikzpicture} \scriptsize
  \matrix (m) [matrix of math nodes,
    nodes in empty cells,nodes={minimum width=3ex,
    minimum height=5ex,outer sep=2pt},
 column sep=3ex,row sep=3ex]{  
 3    & 0& \bigstar & \bigstar & 0 & &   && \\  
 2   & 0 & 0 & \bigstar & \bigstar & 0 &  &&  \\            
1     & 0 & 0 & 0 & \bigstar& \bigstar & 0 &&  \\        
 0     & \bigstar & 0 & 0 & 0& \bigstar & \bigstar& 0&  \\       
 \quad\strut &     0  &  1  & 2  & 3 & 4 &5 &6 &  \\ &&&&&&&& \\}; 
 \draw[thick] (m-1-1.east) -- (m-5-1.east) ;
 \draw[thick] (m-5-1.north) -- (m-5-8.north east) ;
 
 \draw[-stealth, blue]  (m-4-6) -- (m-3-4);
  \draw[-stealth, blue]  (m-4-6) -- (m-2-3);
    \draw[-stealth, blue]  (m-4-6) -- (m-1-2);
  \draw[-stealth, blue]  (m-5-8) -- (m-4-6);
    \draw[-stealth, blue]  (m-6-9) -- (m-4-6);

\end{tikzpicture}
\vspace{-2em}
\caption{$E^r_{n-3,0}$ admits no non-trivial differentials for page $r \geq 2$, illustrated when $n=7$.}
\label{E^2_{n-3,0}}
\end{figure}
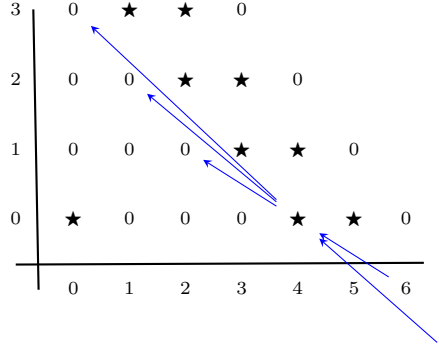

By part \ref{E-infty}  we conclude that $E^2_{n-3,0} \cong 0$  for all $n> 3$, and when $n=3$ we see $$E^2_{n-3, 0} = E^2_{0, 0} = E^{\infty}_{0, 0} = H_{0}(\B'_n) \cong \Z,$$ where the final isomorphism follows from \autoref{primecon}.

It remains to show part \ref{E2}, which we do in two parts: we first treat the case $q>0$, and then the case $q=0$. For $q>0$ the groups $H_q(\B(V))$ are non-zero only when $\rank(V)$ is $(q+1)$ or $(q+2)$ by \autoref{vdKB}. We can therefore realize the functor $H_q(\B(-))$ as an extension of functors $F''$ by $F'$ each supported on elements $V$ of a single height, as follows: 

\begin{center} \footnotesize
\begin{tikzpicture} 
\matrix(m)[matrix of math nodes,
row sep=.8em, column sep=2em, inner sep=1em,
text height=1.5ex, text depth=0.25ex]
{ &\rank(U)=q+1  & \rank(W)=q+2 &  & 0&   \\ 
&0 & H_{q} (\B(W)) &   & F'= \left[ V \longmapsto \left\{ \begin{array}{l}H_{q} (\B(V)),  \rank_R(V)=q+2 \\ 0,  \qquad \qquad\text{otherwise} \end{array}\right. \right]  & \\
 &H_{q} (\B(U))  & H_{q} (\B(W)) &  & H_q(\B(-))& \\
 &H_{q} (\B(U)) & 0 & & F''= \left[ V \longmapsto \left\{ \begin{array}{l}  H_{q} (\B(V)),  \rank_R(V)=q+1 \\ 0,  \qquad \qquad\text{otherwise} \end{array} \right. \right]   &   \\
& & &  & 0&   \\ 
};
\path[->,font=\scriptsize,>=angle 90]
(m-2-2) edge (m-2-3)
(m-3-2) edge node [above] {$(U \hookrightarrow W)_*$}  (m-3-3)  
(m-4-2) edge (m-4-3)
(m-2-2) edge (m-3-2)
(m-3-2) edge (m-4-2)
(m-2-3) edge (m-3-3)
(m-3-3) edge (m-4-3)
(m-1-5) edge (m-2-5)
(m-2-5) edge (m-3-5)
(m-3-5) edge (m-4-5)
(m-4-5) edge (m-5-5)
;
\end{tikzpicture} \end{center} We can then apply \autoref{ChainsSingleSupport} to the terms in the associated long exact sequence on homology:  \begin{center}
\begin{tikzpicture} \scriptsize
\matrix(m)[matrix of math nodes,
row sep=1em, column sep=1.6em,
text height=1.5ex, text depth=0.25ex]
{ \cdots &H_p(\T_n;  F') & H_p\Big(\T_n,  [V \mapsto H_q(\B(V))]\Big) & H_p(\T_n;  F'') & \cdots \\ 
 \; &\displaystyle  \bigoplus_{\substack{ W \subseteq R^n \\ \rank(W)=q+2}} \widetilde{H}_{p-1}(\T(R^n/W); H_q(\B(W))  & E^2_{p,q}  & \displaystyle   \bigoplus_{\substack{ U \subseteq R^n \\ \rank(U)=q+1}} \widetilde{H}_{p-1}(\T(R^n/U); H_q(\B(U))  & \\  &\\
};
\path[->,font=\scriptsize,>=angle 90]
(m-1-1) edge (m-1-2)
(m-1-2) edge (m-1-3)
(m-1-3) edge (m-1-4)
(m-1-4) edge (m-1-5);
\draw[double]
(m-1-2) -- (m-2-2)
(m-1-3) -- (m-2-3)
(m-1-4) -- (m-2-4);
\end{tikzpicture} \end{center} Since the reduced homology of $\T(V)$ is supported in degree $\left(\rank(V)-2\right)$ by \autoref{SolomonTits}, we conclude from this long exact sequence that for $q>0$ the homology groups $E^2_{p,q}$ can be non-zero only when $(p+q)$ is equal to $(n-3)$ or $(n-2)$. 

Now, consider the case when $q=0$. The homology group $H_0(\B(V))$ is $\Z$ for $\rank(V) \neq 2$ by \autoref{B1contractible} and \autoref{vdKB}. Thus we can express the functor $H_0(\B(-))$ as an extension of the constant functor $\Z$ by the functor $\widetilde H_0(\B(-))$ supported on submodules $V$ of rank $2$. 

\begin{center}
\begin{tikzpicture}  \footnotesize
\matrix(m)[matrix of math nodes, 
row sep=1.6em, column sep=1.6em, inner sep=1em,
text height=1.5ex, text depth=0.25ex]
{ & & &  & 0&   \\ 
 &   \left[ V \longmapsto  \widetilde{H}_0(\B(V)) = \left\{ \begin{array}{l} 0,  \qquad \quad \rank(V) \neq 2 \\  \widetilde{H}_0(\B_2),  \rank(V) =2 \end{array} \right. \right]  & &  & \widetilde{H}_0(\B(-)) & \\
 & \left[V \longmapsto H_0(\B(V)) \right] & &  & H_0(\B(-))& \\
 & \left[V \longmapsto \Z \right] & &  & \Z  &  \\
& & &  & 0&   \\ 
};
\path[->,font=\scriptsize,>=angle 90]
(m-2-2) edge (m-3-2)
(m-3-2) edge (m-4-2)
(m-1-5) edge (m-2-5)
(m-2-5) edge (m-3-5)
(m-3-5) edge (m-4-5)
(m-4-5) edge (m-5-5)
;
\end{tikzpicture} \end{center}

We apply  \autoref{ChainsSingleSupport} and \autoref{H(T_n,tilde H_0)} to the associated long exact sequence on homology groups:
\begin{center}
\begin{tikzpicture} \scriptsize
\matrix(m)[matrix of math nodes,
row sep=.8em, column sep=1.6em, inner sep=1em,
text height=1.5ex, text depth=0.25ex]
{ \cdots &H_p(\T_n;   \widetilde{H}_0(\B(-))) & H_p(\T_n; H_0(\B(-))) & H_p(\T_n; \Z) & \cdots  \\
 \; &\displaystyle  \left\{ \begin{array}{l}  \bigoplus_{ \rank(V)=2}  \St_{n-2}\otimes  \widetilde H_0(\B_2), \; p=n-3 \\ 0, \qquad  \qquad \text{otherwise} \end{array} \right. & E^2_{p,0} & \displaystyle   \left\{ \begin{array}{l} \St_n, \;  p=n-2 \\ \Z,\; \quad  p=0 \\ 0, \; \quad  \text{otherwise}  \end{array} \right.  \\
};
\path[->,font=\scriptsize,>=angle 90]
(m-1-1) edge (m-1-2)
(m-1-2) edge (m-1-3)
(m-1-3) edge (m-1-4)
(m-1-4) edge (m-1-5);
\draw[double]
(m-1-2) -- (m-2-2)
(m-1-3) -- (m-2-3)
(m-1-4) -- (m-2-4);
\end{tikzpicture} \end{center}
Again we conclude that $E^2_{p,0}$ vanishes unless $p$ is $(n-3)$, $(n-2)$, or $0$, which completes the proof of part \ref{E2}. 
\end{proof}

%
%
%
%

\begin{proposition}  \label{StSES} Let $R$ be a PID and let $n \geq 3$. There is an exact sequence: \[0 \m  E^2_{n-2,0} \m \St_n \m H_{n-3}(\T_n;\widetilde H_0(\B(-)) \m 0.\]
\end{proposition}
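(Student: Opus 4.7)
My plan is to derive the claimed short exact sequence from the long exact sequence in poset homology $H_*(\T_n; -)$ induced by the short exact sequence of functors
\[0 \to \widetilde H_0(\B(-)) \to H_0(\B(-)) \to \Z \to 0\]
on $\T_n$, where $\Z$ denotes the constant functor. This is precisely the extension used in the proof of part \ref{E2} of \autoref{E2/Einfty}.

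The next step is to identify the three resulting homology theories. By the Solomon--Tits theorem (\autoref{SolomonTits}) together with the fact that $\T_n$ is connected for $n \geq 3$, the homology $H_*(\T_n; \Z)$ is concentrated in degrees $0$ and $n-2$, where it equals $\Z$ and $\St_n$ respectively. By \autoref{H(T_n,tilde H_0)}, $H_*(\T_n; \widetilde H_0(\B(-)))$ is concentrated in degree $n-3$. Finally, $H_*(\T_n; H_0(\B(-))) = E^2_{*, 0}$ by the definition of the spectral sequence. Extracting the segment of the long exact sequence running from degree $n-2$ down to degree $n-3$, and using that both $H_{n-2}(\T_n; \widetilde H_0(\B(-)))$ and $H_{n-4}(\T_n; \widetilde H_0(\B(-)))$ vanish, I obtain the six-term exact sequence
\[0 \to E^2_{n-2,0} \to \St_n \to H_{n-3}(\T_n; \widetilde H_0(\B(-))) \to E^2_{n-3, 0} \to H_{n-3}(\T_n; \Z) \to 0.\]

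For $n > 3$, part \ref{E2corner} of \autoref{E2/Einfty} gives $E^2_{n-3, 0} = 0$, which immediately yields the claimed short exact sequence. The only real subtlety is the edge case $n = 3$: here part \ref{E2corner} of \autoref{E2/Einfty} gives $E^2_{0,0} \cong \Z$ and $H_0(\T_3; \Z) \cong \Z$, and the map between them is surjective by exactness of the sequence above. Since a surjection $\Z \twoheadrightarrow \Z$ is automatically an isomorphism, the preceding connecting map $H_0(\T_3; \widetilde H_0(\B(-))) \to E^2_{0,0}$ must vanish, giving the claimed sequence. The main obstacle is therefore just this edge case, and it resolves itself via the Hopfian property of $\Z$; the rest is assembling the identifications already established earlier in the section.
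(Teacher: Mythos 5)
Your proof is correct and follows essentially the same route as the paper's: you build the long exact sequence from the functor extension $0 \to \widetilde H_0(\B(-)) \to H_0(\B(-)) \to \Z \to 0$, identify the terms via \autoref{H(T_n,tilde H_0)}, \autoref{SolomonTits}, and \autoref{E2/Einfty}, and extract the six-term exact sequence. Your handling of the $n=3$ edge case via the Hopfian property of $\Z$ is a slightly more self-contained variant of the paper's argument, which instead identifies the surjection $E^2_{0,0} \to H_0(\T_3;\Z)$ with the isomorphism $H_0(\cB_3') \to H_0(\cT_3)$ induced by connectivity of both complexes; both arguments are valid.
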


\begin{proof}
Consider again the short exact sequence of functors
\[ 0\m \widetilde H_0(\B(-)) \m H_0(\B(-)) \m \Z \m 0\]
and the associated long exact sequence on the homology of $\T_n$ described in the proof of \autoref{E2/Einfty}. When $p=(n-2)$, we get the following long exact sequence:

\begin{adjustbox}{width=1.1\textwidth,center}
\begin{tikzpicture} \scriptsize
\matrix(m)[matrix of math nodes,
row sep=.8em, column sep=1.6em, inner sep=1em,
text height=1.5ex, text depth=0.25ex]
{ \;  & H_{n-2}(\T_n;   \widetilde{H}_0(\B(-))) & E^2_{n-2,0} & H_{n-2}(\T_n; \Z) & H_{n-3}(\T_n;  \widetilde{H}_0(\B(-)))  & E^2_{n-3,0} & H_{n-3}(\T_n;\Z) & \; \\
 \; & 0 &  &   \St_n   &  \displaystyle \bigoplus_{\substack{V \subseteq R^n \\ \rank(V)=2}} \St_{n-2} \otimes \widetilde H_0(\B_2) & \left\{\begin{array}{l} 0, \; n >3 \\ \Z, \; n = 3  \end{array}\right. & \left\{\begin{array}{l} 0, \; n >3 \\ \Z, \; n = 3  \end{array}\right. \\ & &\\ 
};
\path[->,font=\scriptsize,>=angle 90]
(m-1-1) edge (m-1-2)
(m-1-2) edge (m-1-3)
(m-1-3) edge (m-1-4)
(m-1-4) edge (m-1-5)
(m-1-5) edge (m-1-6)
(m-1-6) edge (m-1-7)
(m-1-7) edge (m-1-8);
\draw[double]
(m-1-2) -- (m-2-2)
(m-1-4) -- (m-2-4)
(m-1-5) -- (m-2-5)
(m-1-6) -- (m-2-6)
(m-1-7) -- (m-2-7);
\end{tikzpicture} 
\end{adjustbox}

Here, the description of $E^2_{n-3,0}$ follows from \autoref{E2/Einfty} part \ref{E2corner}, and the groups $H_{n-2}(\T_n; \widetilde H_0(\B(-)))$ and $H_{n-2}(\T_n; \widetilde H_0(\B(-)))$ are computed in \autoref{H(T_n,tilde H_0)}. For $n>3$, we obtain the desired short exact sequence immediately. For $n=3$, we note that $E^2_{n-3,0} = E^2_{0,0} \cong E^\infty_{0,0}$ and hence  the map $E^2_{n-3,0} \m H_{n-3}(\T_n; \Z) $ agrees with the map $H_0(\cB_3') \m H_0(\cT_3)$. Since $\cB_3'$ and $\cT_3$ are connected, this map is an isomorphism and so the map $ H_{n-3}(\T_n;  \widetilde{H}_0(\B(-)))  \m  E^2_{n-3,0} $ is the zero map. 
\end{proof}

Since  \[H_{n-3}(\T_n; \widetilde H_0(\B(-))) \cong  \bigoplus_{\substack{V \subseteq R^n \\ \rank(V)=2}} \St_{n-2} \otimes \widetilde H_0(\B_2)\] by  \autoref{H(T_n,tilde H_0)}, the short exact sequence of \autoref{StSES} has the following consequence. 

\begin{corollary}  \label{notsurjcor} Let $n \geq 3$, and let $R$ be a PID such that
  $\B_2(R)$ is not connected.  The map $E^2_{n-2,0} \m \St_n$ is not surjective. 
\end{corollary}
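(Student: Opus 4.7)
The plan is to deduce the corollary almost immediately from the short exact sequence provided by \autoref{StSES} together with the computation of the right-hand term in \autoref{H(T_n,tilde H_0)}.

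First, I would invoke \autoref{StSES} to obtain the short exact sequence
\[0 \to E^2_{n-2,0} \to \St_n \to H_{n-3}(\T_n;\widetilde H_0(\B(-))) \to 0.\]
By exactness, the map $E^2_{n-2,0} \to \St_n$ is surjective if and only if the quotient $H_{n-3}(\T_n;\widetilde H_0(\B(-)))$ vanishes. So it suffices to show that this quotient is nonzero under our hypotheses.

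Next, I would apply \autoref{H(T_n,tilde H_0)}, which identifies
\[H_{n-3}(\T_n;\widetilde H_0(\B(-))) \cong \bigoplus_{\substack{V \subseteq R^n \\ \rank(V)=2}} \St_{n-2}(R) \otimes \widetilde H_0(\B_2(R)).\]
Since $n \geq 3$, the indexing set of rank-$2$ summands $V \subseteq R^n$ is nonempty. The factor $\widetilde H_0(\B_2(R))$ is nonzero by the hypothesis that $\cB_2(R)$ is disconnected, and the factor $\St_{n-2}(R)$ is nonzero for $n-2 \geq 1$ (for instance, $\St_1 \cong \widetilde H_{-1}(\cT_1) \cong \Z$, as noted at the end of the proof of \autoref{H(T_n,tilde H_0)}). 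Hence the displayed direct sum is nonzero, and surjectivity of $E^2_{n-2,0} \to \St_n$ fails.

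There is no genuine obstacle here; the corollary is a direct reading of the short exact sequence, and all the hard work is already packaged into \autoref{StSES} and \autoref{H(T_n,tilde H_0)}. The only thing to be careful about is confirming nonvanishing of both tensor factors and nonemptiness of the indexing set, which the hypotheses $n \geq 3$ and $\cB_2(R)$ disconnected immediately supply.
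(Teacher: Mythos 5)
Your proof is correct and takes essentially the same route as the paper: the paper presents \autoref{notsurjcor} as an immediate consequence of the short exact sequence in \autoref{StSES} combined with the non-vanishing clause of \autoref{H(T_n,tilde H_0)} (whose proof uses $\St_1\cong\widetilde H_{-1}(\T_1)\cong\Z$, just as you note). You spell out the details slightly more explicitly, but the argument is the same.
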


There is an edge morphism $H_{n-2}(\B_n') \m E^\infty_{n-2,0}$. Because there are no differentials into $E^r_{n-2,0}$ for $r>1$, there is a map $E^\infty_{n-2,0} \m E^2_{n-2,0}$.  The following proposition is implicit in the proof of Church--Farb--Putman  \cite[Proof of Theorem A]{CFP-Integrality}. In particular, see Equation 3.1 and the surrounding discussion.

\begin{proposition}  The composition $H_{n-1}(\B_n,\B_n') \m H_{n-2}(\B_n') \m E^\infty_{n-2,0} \m E^2_{n-2,0} \m \St_n$ is the map $\alpha$ described in the beginning of the section. \label{compIntMap}
\end{proposition}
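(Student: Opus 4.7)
The plan is to compute, at the chain level, the image of a representative of $(F_1, \ldots, F_n) \in H_{n-1}(\cB_n, \cB_n')$ through each arrow of the composition, and verify that the output agrees with the formula defining $\alpha(F_1, \ldots, F_n)$.

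First, the connecting homomorphism of the pair $(\cB_n, \cB_n')$ sends $(F_1, \ldots, F_n)$ to its simplicial boundary
\[ \sum_{i=1}^{n} (-1)^{i-1} (F_1, \ldots, \hat F_i, \ldots, F_n) \in H_{n-2}(\cB_n'). \]
Under the standard identification of $|\cB_n'|$ with $|\Delta(\B_n')|$ via barycentric subdivision, each face $(F_1, \ldots, \hat F_i, \ldots, F_n)$ becomes the signed sum, over orderings $\pi$ of its vertices, of the corresponding maximal chains $\{F_{\pi(1)}\} < \{F_{\pi(1)}, F_{\pi(2)}\} < \cdots < \{F_{\pi(1)}, \ldots, F_{\pi(n-1)}\}$ in the poset $\B_n'$, with coefficient $\sgn(\pi)$.

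Second, the edge morphism $H_{n-2}(\B_n') \to E^\infty_{n-2,0}$ followed by the inclusion into $E^2_{n-2,0}$ is induced at the chain level by the filtration on $C_*(\Delta(\B_n'))$ underlying the Quillen spectral sequence for $f\colon \B_n' \to \T_n$: each maximal chain as above is sent via $f$ to its image flag $F_{\pi(1)} \subset F_{\pi(1)} \oplus F_{\pi(2)} \subset \cdots \subset F_{\pi(1)} \oplus \cdots \oplus F_{\pi(n-1)}$ in $\T_n$, paired with the class of $\{F_{\pi(1)}\}$ in $H_0(\B(F_{\pi(1)})) \cong \Z$. The final map $E^2_{n-2,0} \to \St_n$, coming from the short exact sequence $0 \to \widetilde H_0(\B(-)) \to H_0(\B(-)) \to \Z \to 0$, is induced by the augmentation and at the chain level simply retains the flag as an element of $C_{n-2}(\cT_n)$.

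Assembling these steps, the image in $\St_n$ is a signed sum of flags indexed by pairs $(i, \pi)$, which I identify with permutations $\sigma \in S_n$ by setting $\sigma(n) = i$ and $\sigma(j) = \pi(j)$ for $j<n$. A direct sign calculation combining $(-1)^{i-1}$ from the boundary map with $\sgn(\pi)$ from the subdivision gives coefficient $\sgn(\sigma)$ for the $\sigma$-term, up to a global sign absorbed into the chosen orientation of the top simplex. The result is precisely $\alpha(F_1, \ldots, F_n)$. The main obstacle is setting up the chain-level description of the edge morphism in the map-of-posets spectral sequence, and tracking the signs through the boundary and the subdivision; fortunately, this is precisely the bookkeeping performed around Equation~(3.1) of Church--Farb--Putman \cite{CFP-Integrality}, to which the proposition reduces.
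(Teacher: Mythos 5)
Your proposal is correct and matches the paper's approach: the paper itself gives no direct proof, instead citing Church--Farb--Putman \cite[Equation~3.1 and surrounding discussion]{CFP-Integrality}, which is exactly the reduction you make at the end. The chain-level sketch you provide (connecting homomorphism, barycentric subdivision, edge morphism of the Quillen spectral sequence, augmentation $H_0(\B(-))\to\Z$, and the sign reindexing $(i,\pi)\leftrightarrow\sigma$) is precisely the bookkeeping carried out in that reference, including the global sign ambiguity from the choice of fundamental class.
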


\begin{proposition} Take $n \geq 3$, and let $R$ be PID with $\B_2(R)$ not connected. The composition $H_{n-1}(\B_n,\B_n') \m H_{n-2}(\B_n') \m E^\infty_{n-2,0} \m E^2_{n-2,0} \m \St_n$ is not surjective. \label{compNotSurj}
\end{proposition}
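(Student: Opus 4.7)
The plan is essentially to chain together two results already established in the section, so the argument will be short. First, I would observe that by \autoref{compIntMap}, the composition under consideration is precisely the map $\alpha\colon H_{n-1}(\cB_n,\cB_n') \to \St_n$ whose image is the submodule of $\St_n$ generated by integral apartment classes. So proving the proposition is the same as showing that $\alpha$ is not surjective.

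Next, I would note a trivial but crucial structural point: the last arrow of the composition is $E^2_{n-2,0} \to \St_n$, and any composition whose final map fails to be surjective must itself fail to be surjective. So it suffices to check non-surjectivity of this final arrow, which is exactly the content of \autoref{notsurjcor}: under the hypothesis that $\B_2(R)$ is not connected and $n \geq 3$, the map $E^2_{n-2,0} \to \St_n$ is not surjective because its cokernel surjects onto the nonzero group
\[
\bigoplus_{\substack{V \subseteq R^n \\ \rank(V)=2}} \St_{n-2} \otimes \widetilde H_0(\B_2)
\]
appearing at the end of the short exact sequence of \autoref{StSES}.

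Combining these two observations gives the proposition immediately. There is really no obstacle here: all the genuine work has been front-loaded into the analysis of the spectral sequence in \autoref{E2/Einfty}, the identification of the cokernel in \autoref{StSES}, the non-vanishing input from \autoref{H(T_n,tilde H_0)} (which in turn uses that $\B_2(R)$ is disconnected), and the identification of the edge composition with $\alpha$ in \autoref{compIntMap}. The proof of \autoref{compNotSurj} is just the assembly step that packages these into the statement about $\alpha$.
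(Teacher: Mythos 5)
Your proof is correct and uses exactly the paper's argument: since the final arrow $E^2_{n-2,0} \to \St_n$ is not surjective by \autoref{notsurjcor}, the composition cannot be surjective. The opening observation invoking \autoref{compIntMap} to identify the composition with $\alpha$ is true but not needed for this particular proposition (it is used later, in the proof of \autoref{main}); the paper's proof omits it.
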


\begin{proof}
The map $E^2_{n-2,0} \m \St_n$ is not surjective so the composition is not surjective.
\end{proof}

\begin{proposition} Let $R$ be a PID with $\B_2(R)$ not connected. The map $\alpha\colon H_{1}(\B_2,\B_2') \m \St_2$ is not surjective.  \label{n=2case}
\end{proposition}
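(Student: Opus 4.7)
The plan is to handle this $n=2$ case directly, bypassing the spectral sequence machinery of the section (which was set up for $n \geq 3$).

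First I would identify the objects explicitly. When $n=2$, the Tits building $\cT_2$ is zero-dimensional: its vertices are the rank-one direct summands (``lines'') $L \subseteq R^2$, and $\St_2 = \widetilde{H}_0(\cT_2)$ is the augmentation ideal of the free abelian group on these lines. The graph $\cB_2$ has the same vertex set, with an edge $\{L, L'\}$ for each decomposition $R^2 = L \oplus L'$, and $\cB_2'$ is its $0$-skeleton. Consequently $H_1(\cB_2, \cB_2')$ is generated by oriented edges $(L, L')$ subject to the relation $(L, L') = -(L', L)$.

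Next I would specialize the formula for $\alpha$ from the start of \autoref{sec4} to $n=2$. A direct calculation using $S_2 = \{e, (12)\}$ shows that $\alpha$ sends an oriented edge $(L, L')$ to the class $L - L' \in \widetilde{H}_0(\cT_2) = \St_2$. Thus the image of $\alpha$ is the subgroup spanned by differences of complementary lines.

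The key step exploits the disconnectedness hypothesis. Fix lines $L_0$ and $L_1$ in distinct connected components of $\cB_2$, and define a homomorphism $\phi\colon \Z[\mathrm{lines}] \to \Z$ sending each line to $1$ if it lies in the connected component of $L_0$ and to $0$ otherwise. For every edge $\{L, L'\}$ of $\cB_2$ the two endpoints lie in a common component, so $\phi(L - L') = 0$; hence $\phi$ vanishes on the image of $\alpha$. On the other hand $\phi(L_0 - L_1) = 1$, so the element $L_0 - L_1 \in \St_2$ lies outside the image of $\alpha$, witnessing non-surjectivity.

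There is no substantive obstacle here: the content is essentially the observation that $\pi_0(\cB_2)$ yields a locally constant functional on lines which annihilates every image $L - L'$ but separates $L_0$ from $L_1$. The only point requiring care is verifying that the definition of $\alpha$ really specializes to $(L,L') \mapsto L - L'$ in rank two.
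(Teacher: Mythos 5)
Your argument is correct and is essentially the paper's proof, made explicit. The paper simply writes down the long exact sequence of the pair $(\B_2,\B_2')$, notes that $H_0(\B_2,\B_2')=0$ (since $\B_2'$ is the $0$-skeleton) and $\widetilde H_0(\B_2)\neq 0$ (disconnectedness), and concludes that $\alpha$ has nonzero cokernel; your functional $\phi$ built from $\pi_0(\cB_2)$ is precisely the dual description of the quotient map $\widetilde H_0(\B_2')\twoheadrightarrow\widetilde H_0(\B_2)$, and unwinding $\alpha(L,L')=L-L'$ just verifies by hand that its image equals the kernel of that quotient.
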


\begin{proof} Since $R$ is PID, $\B_2' \cong \T_2$ so we just need to show the map $\alpha\colon H_{1}(\B_2,\B_2') \m \widetilde H_0(\B_2')$ is not surjective. This map fits into an exact sequence: \[H_{1}(\B_2,\B_2') \m \widetilde H_0(\B_2') \m \widetilde H_0(\B_2) \m H_0(\B_2,\B'_2).\] The relative homology group $H_0(\B_2,\B'_2)$ vanishes because $\B'_2$ is the $0$-skeleton of $\B_2$. Since $\widetilde H_0(\B_2)$ is not zero, $\alpha\colon H_{1}(\B_2,\B_2') \m \widetilde H_0(\B_2') $ is not surjective. 
\end{proof}

We now prove \autoref{main}. 

\begin{proof}[Proof of \autoref{main}]
Let $\cO_K$ be a quadratic imaginary number ring which is a PID but not Euclidean. By \autoref{B2notCon}, $\B_2$ is not connected. 

By \autoref{compIntMap} and \autoref{compNotSurj} for $n \geq 3$ and by \autoref{n=2case} for $n=2$, the map \[\alpha\colon H_{n-1}(\B_n,\B_n') \m \St_n\] is not surjective.
The image of $\alpha$ is the submodule of the Steinberg module generated by integral apartment classes. Thus, $\St_n(K)$ is not generated by integral apartment classes. \end{proof}

\begin{remark}
The arguments show the Steinberg module of any PID is not generated by integral apartment classes if $\B_2$ not connected. See Cohn \cite[Theorem C]{Cohn} and Church--Farb--Putman  \cite[Proof of Proposition 2.1]{CFP-Integrality} for examples of rings of integers in function fields with $\B_2$ not connected. 

\end{remark}

\begin{remark}
In this section, we used the map of posets spectral sequence to prove a certain map is not surjective using that a particular simplicial complex is not connected. This is an adaptation of the arguments of \cite{MPP} where a conjecture of Lee--Szczarba \cite[Page 28]{LS} is disproved. There, the fact that a certain simplicial complex is not simply connected is used to show a map is not injective. 
\end{remark}

\section{Non-vanishing of top degree cohomology}

In this section, we show that our proof of non-integrality can sometimes be adapted to show non-vanishing of the cohomology in the virtual cohomological dimension. Throughout, $d$ will denote a negative squarefree integer. 


\subsection{An equivariant calculation of $H^2(\SL_2(\cO_d);\Q)$ for
  $d=-43$, $-67$, $-163$ 
}

\hfill

We begin by recalling a calculation of $H^2(\SL_2(\cO_d);\Q)$ for $d=-43,-67,-163$ and then describe our calculation of $H^2(\GL_2(\cO_d);\Q)$ for these rings. We also compute the torsion at primes greater than three. Note that $\nu_2=2$ so this is the virtual cohomological dimension. Since the units in these rings are $\{-1,1\}$, $H^i(\SL_n(\cO_d))$ is naturally an $\Z/2\Z$--representation. Knowing both $H^i(\SL_n(\cO_d); \Q)$ and $H^i(\GL_n(\cO_d);\Q)$ allows one to compute $H^i(\SL_n(\cO_d);\Q)$ as a $\Z/2\Z$--representation. 

The following was proven by Rahm \cite[Proposition 1]{Rahm}, with some cases previously known by the work of Vogtmann \cite{Vo}. Rahm's result concerns the integer homology of the group $\PSL_2(\cO_d)$, whose rational homology agrees with that of $\SL_2(\cO_d)$.

\begin{theorem}[{\cite[Proposition 1]{Rahm}}] \label{NonvanishingImaginary} 
Let $\cO_d$ denote the ring of integers in the quadratic number field $\Q(\sqrt{d})$. Then \[\dim_{\Q} H^{2}(\SL_{2}(\cO_{d});\Q) \geq 
 \begin{cases}
1 & \text{ for $d=-43$}, \\
2 & \text{ for $d=-67$}, \\
6 & \text{ for $d=-163$}.
\end{cases}
\]

\end{theorem}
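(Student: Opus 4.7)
The plan is to compute $H^2(\SL_2(\cO_d); \Q)$ through the action of $\PSL_2(\cO_d)$ on hyperbolic $3$-space $\HH^3$; since the kernel $\{\pm I\}$ of $\SL_2 \to \PSL_2$ is finite, rationally we have $H^*(\SL_2(\cO_d); \Q) \cong H^*(\PSL_2(\cO_d); \Q)$. The Bianchi group $\PSL_2(\cO_d)$ acts properly on $\HH^3$ with finite covolume, and for $d = -43, -67, -163$ the ring $\cO_d$ has class number one, so the quotient orbifold $\HH^3 / \PSL_2(\cO_d)$ has exactly one cusp.

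First I would fix an equivariant deformation retract $X \subset \HH^3$ of dimension $2$ (a \emph{spine}), using one of the explicit constructions available in the literature (Mendoza, Fl\"oge, or Swan's Voronoi-type algorithm). This produces a $2$-dimensional CW complex on which $\PSL_2(\cO_d)$ acts cellularly with finite stabilizers; in the Bianchi case these stabilizers are cyclic or dihedral of order dividing $12$. I would then tabulate the cells of the quotient $X/\PSL_2(\cO_d)$, the isomorphism type of each cell stabilizer, and the sign of each stabilizer's action on the orientation of its cell. Feeding this data into the equivariant cellular cochain complex with $\Q$-coefficients, finite stabilizers act trivially on $\Q$ up to orientation signs, so the equivariant complex collapses to a finite-dimensional cochain complex of $\Q$-vector spaces supported on $X/\PSL_2(\cO_d)$, where each cell contributes a $\Q$ or a $0$ summand according to whether its stabilizer preserves orientation. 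I would then read off $\dim_\Q H^2$ by direct linear algebra. An equivalent route, matched to the rest of the paper, is to invoke Borel--Serre duality to rewrite $H^2(\SL_2(\cO_d); \Q) \cong H_0(\SL_2(\cO_d); \St_2(K) \otimes \Q)$ and compute this coinvariant space from an explicit presentation of $\St_2(K)$.

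The main obstacle is the explicit construction and bookkeeping of the spine for $d = -163$, which is combinatorially the most involved case and where the full $\dim \geq 6$ contribution to $H^2$ appears. An independent sanity check is available via an Eichler--Shimura-type identification of $H^2_{\mathrm{cusp}}(\PSL_2(\cO_d); \Q)$ with weight-$2$ cuspidal Bianchi modular forms of level $\SL_2(\cO_d)$: by Harder's theorem, the Eisenstein contribution to $H^2$ is controlled by the class group, so under the class-number-one hypothesis the stated dimensions $1$, $2$, $6$ should arise entirely from cuspidal classes. Comparing the cellular computation against tabulated dimensions of cuspidal Bianchi forms at discriminants $-43$, $-67$, $-163$ confirms the bounds.
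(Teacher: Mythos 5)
The paper does not prove this statement itself; it simply cites Rahm \cite{Rahm} (Proposition 1), noting that some cases were previously known to Vogtmann. Your primary approach---compute $H^2(\PSL_2(\cO_d);\Q)$ via an equivariant $2$-dimensional spine for the Bianchi group action on $\HH^3$, then read off rational cohomology from the orientation behavior of the finite cell stabilizers on the quotient cell complex---is precisely the method of the cited reference, so your proposal matches the proof the paper relies on. Your secondary route via Borel--Serre duality and coinvariants of $\St_2$ is not Rahm's argument but is essentially the Voronoi-complex strategy the paper itself uses for the companion $\GL_2(\cO_d)$ computation later in the same section; the two are consistent and dual points of view on the same finite linear-algebra problem.
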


We now describe the analogous calculation for $\GL_2(\OO_d)$. This will follow from the methods of \cite{aimpaper}, \cite[\S3]{PerfFormModGrp},
\cite[\S2]{soule-3torsion}, and \cite{agm4}.
For any positive integer $b$, let $\cS_{b}$ be the Serre class of finite
abelian groups with orders only divisible by primes less than or equal
to $b$ \cite{serre}.  Let $\Gamma$ be a finite-index subgroup in
$\GL_n(\OO_K)$. If $b$ is larger than all the primes dividing the orders of
finite subgroups of $\Gamma$, then modulo $\cS_{b}$ the group cohomology of  
$\Gamma$ can be computed using the homology of the Voronoi complex  $\Vor_{n,d}$, a complex coming from a decomposition of a certain space of Hermitian forms.  In particular, up to
torsion divisible by primes less than or equal to $b$, the Voronoi complex
captures the group cohomology.  The number $b$ can be explicitly bounded from
above.  We refer the 
reader to \cite[\S3.1]{PerfFormModGrp} and \cite[\S3]{aimpaper} for the precise
definition of $\Vor_{n,d}$.  We give a brief description in the proof of \autoref{thm5.5}.

\begin{proposition}[{\cite[Lemma~3.9]{aimpaper}}] \label{prop:bound_b}
  Let $p$ be an odd prime, and let $K$ be an imaginary quadratic
  field.  If $g \in \GL_n(K)$ has order $p$, then 
  \[p \leq 
  \begin{cases}
    n + 1 & \text{if $p \equiv 1 \bmod{4}$,}\\
    2n + 1 & \text{otherwise.}
  \end{cases}
  \]
\end{proposition}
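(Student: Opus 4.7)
The plan is to exploit the fact that a matrix $g \in \GL_n(K)$ of order $p$ satisfies $g^p = \Id$, so its minimal polynomial $m(x) \in K[x]$ divides $x^p - 1 = (x-1)\Phi_p(x)$. Since $g$ has order exactly $p > 1$, the minimal polynomial must have at least one irreducible factor in common with $\Phi_p(x)$ over $K$. The degree of any such irreducible factor equals $[K(\zeta_p) : K]$, where $\zeta_p$ is a primitive $p$-th root of unity, since the Galois group $\mathrm{Gal}(K(\zeta_p)/K)$ acts transitively on the primitive $p$-th roots of unity that are roots of any given $K$-irreducible factor of $\Phi_p(x)$. Because $\deg m(x) \leq n$, we obtain the key inequality
\[
[K(\zeta_p) : K] \leq n.
\]

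Next I would compute $[K(\zeta_p):K]$ as a function of $p$ and $K$. Using the tower law applied to the compositum $K(\zeta_p) = K \cdot \Q(\zeta_p)$, one has
\[
[K(\zeta_p) : K] \;=\; \frac{[\Q(\zeta_p):\Q]}{[K \cap \Q(\zeta_p) : \Q]} \;=\; \frac{p-1}{[K \cap \Q(\zeta_p) : \Q]}.
\]
Since $[K:\Q]=2$, the intersection $K \cap \Q(\zeta_p)$ is either $\Q$ or $K$, so this degree is either $p-1$ or $(p-1)/2$.

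The crux is then to determine when $K \subseteq \Q(\zeta_p)$. Here I would invoke the classical fact that the unique quadratic subfield of $\Q(\zeta_p)$ is $\Q(\sqrt{p^*})$, where $p^* = (-1)^{(p-1)/2}p$ (which follows, for instance, from the computation of the quadratic Gauss sum). When $p \equiv 1 \pmod 4$ we have $p^* = p > 0$, so the unique quadratic subfield of $\Q(\zeta_p)$ is \emph{real}; since $K$ is imaginary, $K \not\subseteq \Q(\zeta_p)$, hence $K \cap \Q(\zeta_p) = \Q$ and $[K(\zeta_p):K] = p-1$. Combined with $[K(\zeta_p):K] \leq n$ this yields $p \leq n+1$. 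When $p \equiv 3 \pmod 4$, the quadratic subfield $\Q(\sqrt{-p})$ is imaginary, and it may or may not coincide with $K$; in either case $[K \cap \Q(\zeta_p):\Q] \leq 2$, so $[K(\zeta_p):K] \geq (p-1)/2$, which gives $p \leq 2n+1$.

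The argument is essentially a sequence of elementary observations, so there is no single hard obstacle. The one point requiring a small amount of care is verifying that any $K$-irreducible factor of $\Phi_p(x)$ has degree exactly $[K(\zeta_p):K]$ (rather than some divisor thereof), which is the standard statement that the minimal polynomial of $\zeta_p$ over $K$ has this degree together with the Galois-transitivity of the primitive $p$-th roots of unity; the identification of the quadratic subfield of $\Q(\zeta_p)$ is classical and can simply be quoted.
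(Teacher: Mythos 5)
The paper does not supply its own proof of this statement; it is quoted directly from \cite[Lemma~3.9]{aimpaper}, so there is no in-text argument to compare against. Your proof is nonetheless correct and is the standard one. The two inequalities follow cleanly from the chain: the minimal polynomial of $g$ divides $x^p-1 = (x-1)\Phi_p(x)$ and, since $g$ has order exactly $p$, must contain a $K$-irreducible factor of $\Phi_p$, each of which has degree $[K(\zeta_p):K]$; hence $[K(\zeta_p):K] \le n$. The degree $[K(\zeta_p):K] = (p-1)/[K\cap\Q(\zeta_p):\Q]$ is then pinned down by the classical fact that the unique quadratic subfield of $\Q(\zeta_p)$ is $\Q(\sqrt{p^*})$ with $p^* = (-1)^{(p-1)/2}p$: when $p\equiv 1 \pmod 4$ this subfield is real, so it cannot equal the imaginary field $K$, forcing $K\cap\Q(\zeta_p)=\Q$ and $p-1\le n$; when $p\equiv 3\pmod 4$ the intersection can have degree at most $2$, giving $(p-1)/2 \le n$. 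Both cases match the claimed bounds, and the case split $p\equiv 1$ vs.\ $p\equiv 3\pmod 4$ covers all odd primes. The one step you flag as needing care --- that every $K$-irreducible factor of $\Phi_p$ has degree exactly $[K(\zeta_p):K]$ --- is handled correctly: any root of such a factor is a primitive $p$-th root of unity and generates $K(\zeta_p)$ over $K$. No gap; this is a clean and complete argument.
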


\begin{theorem}[{\cite[Theorem~3.7]{aimpaper}}]\label{thm:aimpaper}
Let $b$ be an upper bound on the torsion primes for $\GL_n(\OO_d)$.  Modulo the
Serre class $\cS_b$, 
\[H_i(\Vor_{n,d}) \cong H_{i - (n - 1)}(\GL_n(\OO_d);\St_n(\Q(\sqrt d))  ) 
\cong H^{n^2 - 1 - i}(\GL_n(\OO_d)).\]
\end{theorem}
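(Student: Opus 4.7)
The plan is to realize both isomorphisms as consequences of a single identity arising from the action of $\Gamma := \GL_n(\OO_d)$ on the symmetric space $X_{n,d}$ of positive-definite Hermitian forms modulo positive real scalars. This $X_{n,d}$ is a contractible orientable manifold of real dimension $n^2 - 1$ on which $\Gamma$ acts properly with finite cell stabilizers. The Voronoi decomposition supplies a $\Gamma$-equivariant polyhedral cell structure on (a spine of) $X_{n,d}$; by construction, $\Vor_{n,d}$ is the associated $\Gamma$-equivariant (Borel--Moore) chain complex recording orbits of cells together with their orientation characters.

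The first step is to run the equivariant homology spectral sequence for $\Gamma \curvearrowright X_{n,d}$, whose $E^1$ page is
\[E^1_{p,q} = \bigoplus_{\sigma \in \Gamma \backslash X^{(p)}_{n,d}} H_q\bigl(\mathrm{Stab}_\Gamma(\sigma); \Z_\sigma\bigr).\]
Each cell stabilizer is a finite subgroup of $\Gamma$ whose order, by \autoref{prop:bound_b}, is divisible only by primes bounded by $b$, so modulo $\cS_b$ the rows with $q > 0$ vanish. The spectral sequence collapses on the $q=0$ row, identifying $H_i(\Vor_{n,d})$ with the $\Gamma$-equivariant Borel--Moore homology of $X_{n,d}$ in degree $i$ modulo $\cS_b$.

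Next, I would apply Poincaré--Lefschetz duality on the contractible orientable $(n^2 - 1)$-manifold $X_{n,d}$: its $\Gamma$-equivariant Borel--Moore homology in degree $i$ matches the $\Gamma$-equivariant ordinary cohomology in degree $n^2 - 1 - i$, which is $H^{n^2 - 1 - i}(\Gamma)$ modulo $\cS_b$ (since $X_{n,d}$ is contractible and the stabilizer contributions are $\cS_b$-torsion). This produces the isomorphism $H_i(\Vor_{n,d}) \cong H^{n^2 - 1 - i}(\Gamma)$. The remaining identification $H^{n^2 - 1 - i}(\Gamma) \cong H_{i - (n - 1)}(\Gamma; \St_n)$ is Borel--Serre duality $H^{\nu_n - j}(\Gamma) \cong H_j(\Gamma; \St_n)$ applied with $\nu_n = n^2 - n$ and $j = i - (n-1)$; substituting these values recovers the exponent $n^2 - 1 - i$ exactly.

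The main obstacle will be the orientation bookkeeping throughout: whether the Borel--Moore or ordinary chains carry the correct sign character, applying Poincaré--Lefschetz duality on a quotient by a group that need not act orientation-preservingly, and tracking how these sign characters interact with torsion stabilizers. Working modulo $\cS_b$ absorbs most of this, since orientation characters take values in $\{\pm 1\}$; a clean implementation is to pass first to a neat torsion-free finite-index subgroup $\Gamma' \leq \Gamma$ of index prime to every prime in $\cS_b$, where no orientation issues arise and the spectral sequence collapses on the nose, and then transfer back to $\Gamma$.
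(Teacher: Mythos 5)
The paper does not prove this statement; it quotes it from \cite[Theorem~3.7]{aimpaper}. So your proposal is being judged on its own, not against an in-paper argument. Modulo one quibble below, your outline is correct and is the standard argument in the Voronoi-cell literature. The route most often spelled out in the references the paper cites runs slightly differently: one works with the extended cone $X^*$ of positive semidefinite Hermitian forms with $K$-rational kernel (mod homothety), notes that $X^*$ is contractible and that $\partial X^*$ is $\Gamma$-equivariantly homotopy equivalent to the rational Tits building, so that $H_*(X^*,\partial X^*)\cong \St_n$ concentrated in degree $n-1$; the equivariant spectral sequence for the relative Voronoi cell structure then collapses modulo $\cS_b$ to give the first isomorphism $H_i(\Vor_{n,d})\cong H_{i-(n-1)}(\Gamma;\St_n)$ directly, and the second is Borel--Serre duality. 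Your version replaces this with equivariant Borel--Moore homology of the open symmetric space $X$ plus Poincar\'e duality, which is equivalent and reaches the isomorphism $H_i(\Vor_{n,d})\cong H^{n^2-1-i}(\Gamma)$ first, then the Steinberg side via Borel--Serre. Both packagings prove the same thing, with the same dimension shift $n^2-1-i = \nu_n - (i-(n-1))$.

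The one place where your reasoning is actually wrong, though the conclusion survives, is the orientation discussion at the end. Working modulo $\cS_b$ does not absorb an orientation character: twisting by a $\{\pm 1\}$-valued character changes the coefficient module, not merely its torsion, and a Serre-class argument cannot make that go away. Nor does passing to a torsion-free finite-index subgroup automatically help, since such a subgroup can still contain orientation-reversing elements. What rescues the argument is that there is simply no orientation issue here: $\GL_n(\OO_d)$ acts on $X_{n,d}$ through the connected group $\GL_n(\CC)$, so every element acts orientation-preservingly and the orientation local system is trivial. (This is a genuine contrast with $\GL_n(\ZZ)$ acting on the symmetric space of $\GL_n(\RR)$, where the ambient Lie group is disconnected and orientation twists genuinely appear.) Replacing your hand-waving by this observation makes the proof clean.
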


By \autoref{prop:bound_b}, the torsion primes for $\GL_2(\OO_d)$ are $2$ and
$3$.  
\begin{corollary}
Modulo $\cS_3$, 
\[H_1(\Vor_{2,d}) \cong H_0(\GL_2(\OO_d);\St_n(\Q(\sqrt d))) \cong H^2(\GL_2(\OO_d)).\]  
\end{corollary}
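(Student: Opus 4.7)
The plan is to apply \autoref{thm:aimpaper} to $\GL_2(\OO_d)$ in degree $i = 1$, once a suitable upper bound $b$ on the torsion primes is in hand. Since the theorem requires only a valid bound $b$, the corollary reduces to verifying that one may take $b = 3$ and then reading off the indices in the statement.

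First I would invoke \autoref{prop:bound_b} with $n = 2$ to constrain the odd torsion primes of $\GL_2(\OO_d)$. The bound gives $p \leq n+1 = 3$ when $p \equiv 1 \pmod 4$ and $p \leq 2n+1 = 5$ otherwise. The only candidate satisfying these inequalities is $p = 3$ (which falls under the second case), since $p = 5$ would require $5 \leq 3$ because $5 \equiv 1 \pmod 4$. Together with the prime $2$, the torsion primes of $\GL_2(\OO_d)$ therefore lie in $\{2, 3\}$, and one may take $b = 3$ as the upper bound required by \autoref{thm:aimpaper}.

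Then I would substitute $n = 2$, $i = 1$, and $b = 3$ into \autoref{thm:aimpaper}. The homological degree becomes $i - (n-1) = 0$, the cohomological degree becomes $n^2 - 1 - i = 2$, and the theorem yields, modulo $\cS_3$,
\[H_1(\Vor_{2,d}) \cong H_0\bigl(\GL_2(\OO_d);\St_2(\Q(\sqrt d))\bigr) \cong H^2(\GL_2(\OO_d)),\]
which is the desired statement. The corollary is an immediate specialization; there is no real obstacle beyond the brief prime-counting check that $b = 3$ is admissible and the bookkeeping of the index shifts.
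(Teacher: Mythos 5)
Your proposal is correct and matches the paper's own approach exactly: apply \autoref{prop:bound_b} with $n=2$ to see that the odd torsion primes are at most $3$ (since $p=5$ would need to satisfy $5\leq 3$ as $5\equiv 1\bmod 4$, and all larger odd primes exceed $2n+1=5$), giving $b=3$, and then specialize \autoref{thm:aimpaper} with $n=2$, $i=1$. The index bookkeeping $i-(n-1)=0$ and $n^2-1-i=2$ is also exactly right.
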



\begin{theorem}\label{thm5.5}
\begin{gather*}
    H_1(\Vor_{2,-43}) \cong \ZZ/2\ZZ, \quad
    H_1(\Vor_{2,-67}) \cong (\ZZ/2\ZZ)^2, \quad
    H_1(\Vor_{2,-163}) \cong (\ZZ/2\ZZ)^6,\\
    H_2(\Vor_{2,-43}) \cong \ZZ/2\ZZ, \quad
    H_2(\Vor_{2,-67}) \cong \ZZ/2\ZZ, \quad
    H_2(\Vor_{2,-163}) \cong (\ZZ/2\ZZ)^2.
\end{gather*}
\end{theorem}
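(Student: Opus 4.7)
The plan is to carry out an explicit equivariant cell-by-cell computation of the Voronoi complex $\Vor_{2,d}$ for each of the three values $d \in \{-43,-67,-163\}$, using the fact that for $n=2$ this complex is small enough to be tabulated by hand (or by \magma) and its differentials written down explicitly.

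First I would recall the setup from \cite[\S3]{aimpaper} and \cite[\S3.1]{PerfFormModGrp}: over $K=\Q(\sqrt d)$, the space $\mathcal{C}_n$ of positive-definite rank-$n$ Hermitian forms carries a $\GL_n(\OO_d)$-action, and the Voronoi decomposition gives a $\GL_n(\OO_d)$-equivariant cellulation whose cells are indexed by (open) cones on sets of minimal vectors of perfect forms. The Voronoi chain complex $\Vor_{n,d}$ has as its degree-$i$ part the free abelian group generated by orientable $\GL_n(\OO_d)$-orbits of $(i{+}1)$-dimensional perfect cones modulo cells whose stabilizer contains an orientation-reversing element, with differential given by the usual boundary in the cellular chain complex of $\mathcal{C}_n / \GL_n(\OO_d)$. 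Thus the task reduces to enumerating perfect binary Hermitian forms over $\OO_d$ up to $\GL_2(\OO_d)$-equivalence, listing their minimal vectors, and computing stabilizers.

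Next, for each $d \in \{-43,-67,-163\}$, I would enumerate the cells of $\Vor_{2,d}$ in each dimension. For rank $2$ the space $\mathcal C_2$ is of real dimension $4$, so $\Vor_{2,d}$ has nonzero groups only in degrees $0,1,2,3$. The enumeration of perfect forms and their facet structure for imaginary quadratic fields has been worked out in prior papers (see the references in the paper, including \cite{PerfFormModGrp} and \cite{agm4}); I would apply the same algorithms in \magma, starting from an initial perfect form and running the Voronoi algorithm until the set of orbits stabilizes. For each orbit representative I would record (a) its minimal vectors, (b) the dimension of its perfect cone, (c) its stabilizer in $\GL_2(\OO_d)$, and (d) an orientation character.

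Then I would assemble the differentials of $\Vor_{2,d}$ using the incidence data among perfect cones, taking care of signs coming from orientations of cells and from the action of stabilizers, and compute $H_1$ and $H_2$ as kernels modulo images of integer matrices. Finally I would compare the results to the values stated in the theorem.

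The main obstacle will be the bookkeeping for $d=-163$, where the number of $\GL_2(\OO_d)$-orbits of perfect cones is largest (consistent with the rank $6$ appearing in $H_1$), and where orientation characters must be tracked precisely in order to distinguish cells that are killed (because some stabilizer element reverses orientation) from cells that survive with their order-two torsion intact. The rest is a mechanical verification that the resulting $\F_2$-computations agree with the advertised $\ZZ/2\ZZ$-powers; since \autoref{prop:bound_b} shows the only torsion primes are $2$ and $3$ and since $3 \in \cS_3$, the reported answer is the full $2$-primary part of the homology and hence gives $H^2(\GL_2(\OO_d))$ modulo $\cS_3$.
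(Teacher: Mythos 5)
Your proposal matches the paper's proof in essence: both proceed by the three-stage Voronoi algorithm of \cite[\S6]{aimpaper} (enumerate perfect Hermitian forms over $\OO_d$, assemble the equivariant cellular chain complex with orientation and stabilizer data, and do integer linear algebra on the differentials), implemented in \magma. The paper additionally records the concrete output---for $d=-43$ it lists the 21 minimal vectors, the four perfect forms $\phi_1,\dots,\phi_4$, their facets, and identifies the resulting polytopes (two triangular prisms, a hexagonal cap, a truncated tetrahedron), and for $d=-67,-163$ it gives the cell counts---but the strategy is the same as yours.
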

\begin{proof}
The algorithms for computing Voronoi homology are given in
\cite[\S6]{aimpaper}.  The three stages are: 
\begin{enumerate}
\item determining the perfect forms;
\item computing the Voronoi complex and the differentials;
\item computing the homology.
\end{enumerate}
We implemented these steps using \textsc{magma}
\cite{magma} and lrs Vertex Enumeration/Convex Hull package \cite{Avis2000}, a C
implementation of the reverse search algorithm for  vertex enumeration/convex
hull problems.   We include some details to give a sense of the computational
task.   We remark the determination of perfect forms is already known
\cite{bianchi}, and the current computational results are consistent with the
earlier results.  

Let $\cH^2(\CC)$ denote the $4$-dimensional real vector space of $2
\times 2$ Hermitian matrices with complex coefficients.  Using the chosen complex embedding of $K=\Q(\sqrt d)$ we can view $\cH^2(K)$, the Hermitian
matrices with coefficients in $K$, as a subset of $\cH^2(\CC)$.
Moreover, this embedding allows us to view $\cH^2(\CC)$ as a
$\QQ$-vector space such that the rational points of $\cH^2(\CC)$ are
exactly $\cH^2(K)$.  Let $C^* \subset \cH^2(\CC)$ denote the non-zero positive
semi-definite Hermitian forms with $K$-rational kernel, and let $X$ denote the
quotient of $C^*$ by positive homothety.  There is a natural identification of a
subset of $X^*$ with hyperbolic $3$-space $\HH^3$.  Voronoi theory describes a
decomposition of $X^*$ in terms of configurations of minimal vectors of
Hermitian forms, which gives rise to a tessellation of $\HH^3$ by ideal
$3$-dimensional hyperbolic polytopes.  These polytopes with certain gluing maps
determine the Voronoi complex and differentials. The Voronoi complex is obtained by working relative to the boundary, in this case exactly the $0$-cells.

The computations for $d = -67$ and $d= -163$ are larger, so we just summarize
some of the key features after giving details for the case $d = -43$.

Let $\omega = \frac{1 + \sqrt{-43}}{2}$.
Consider the vectors $v_1, v_2, \dots, v_{21}$:
\begin{multline*}
\mat{-3\omega+3\\2\omega-12} , \mat{-\omega+3\\-5} , \mat{3\\-\omega-2} ,
\mat{2\omega+7\\-4\omega+2} , \mat{-\omega+10\\-2\omega-10} ,
\mat{\omega\\-\omega+3} , \mat{1\\-1}, \\
\mat{\omega+1\\-\omega+2} ,
\mat{-\omega+4\\-5} , \mat{4\\-\omega-3} , \mat{\omega\\-\omega+2} ,
\mat{\omega+1\\-\omega+3} , \mat{3\\-\omega-1} , \mat{4\\-\omega-2} , \\
\mat{0\\1}
, \mat{1\\0} , \mat{-\omega+2\\-4} , \mat{-\omega+2\\-3} ,
\mat{\omega+2\\-\omega+2} , \mat{-\omega+3\\-4} , \mat{4\\-\omega-1}.
\end{multline*}
Using the Voronoi algorithm adapted to this 
case, we find that there are four equivalence classes of perfect forms.  We
describe a perfect form by its set of minimal vectors (up to $\pm 1$) by giving
the indices of the vectors that are the minimal vectors for that form.  For
example, $\{1,2,5\}$ represents a form with minimal vectors  $\{\pm v_1,
  \pm v_2, \pm v_5\}$.  
We find explicit representatives for each class of perfect forms:
\begin{align*}
\phi_1 &= \{ 1, 2, 3, 4, 5, 6 \},\\
\phi_2 &= \{ 6, 7, 8, 9, 10, 11 \},\\
\phi_3 &= \{ 2, 3, 6, 7, 8, 12, 13, 14, 15 \},\\
\phi_4 &= \{ 7, 8, 12, 13, 14, 15, 16, 17, 18, 19, 20, 21 \}.
\end{align*}
These perfect forms determine hyperbolic polytopes.  The facets of $\phi_1$ are 
\[
\{ 1, 3, 5, 6 \},
\{ 2, 4, 5, 6 \},
\{ 1, 2, 3, 4 \},
\{ 2, 3, 6 \},
\{ 1, 4, 5 \}.
\]
The facets of $\phi_2$ are
\[
\{ 3, 4, 5 \},
\{ 2, 3, 4, 6 \},
\{ 1, 4, 5, 6 \},
\{ 1, 2, 3, 5 \},
\{ 1, 2, 6 \}.
\]
The facets of $\phi_3$ are
\[
\{ 1, 2, 3, 6, 7, 8 \},
\{ 1, 2, 9 \},
\{ 5, 6, 8 \},
\{ 1, 5, 8, 9 \},
\{ 2, 4, 7, 9 \},
\{ 4, 5, 9 \},
\{ 3, 4, 5, 6 \},
\{ 3, 4, 7 \}.
\]
The facets of $\phi_4$ are 
\begin{multline*}
\{ 1, 3, 4, 9, 10, 12 \},
\{ 2, 5, 6, 9, 11, 12 \},
\{ 4, 5, 7, 8, 10, 11 \},
\{ 6, 7, 11 \},\\
\{ 1, 2, 3, 6, 7, 8 \},
\{ 1, 2, 12 \},
\{ 3, 8, 10 \},
\{ 4, 5, 9 \}.
\end{multline*}
We see that $\phi_1$ and $\phi_2$ give rise to triangular prisms, $\phi_3$ gives
rise to a hexagonal cap, and $\phi_4$ gives rise to a truncated tetrahedron.
\begin{center}
\begin{tabular}{cccc}
  \includegraphics[width=0.15\textwidth]{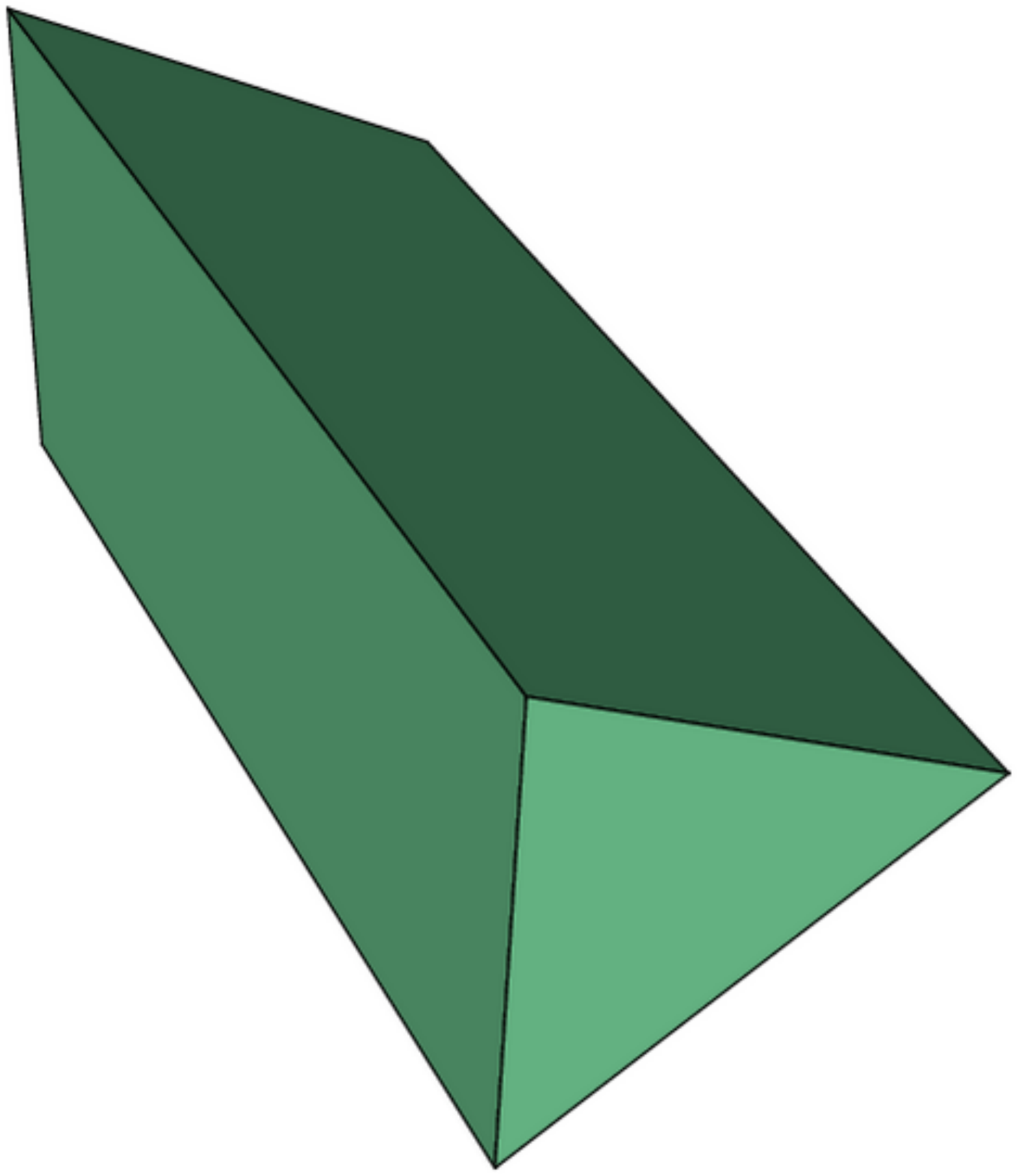} &
  \includegraphics[width=0.15\textwidth]{triangularprism.pdf} & 
  \includegraphics[width=0.15\textwidth]{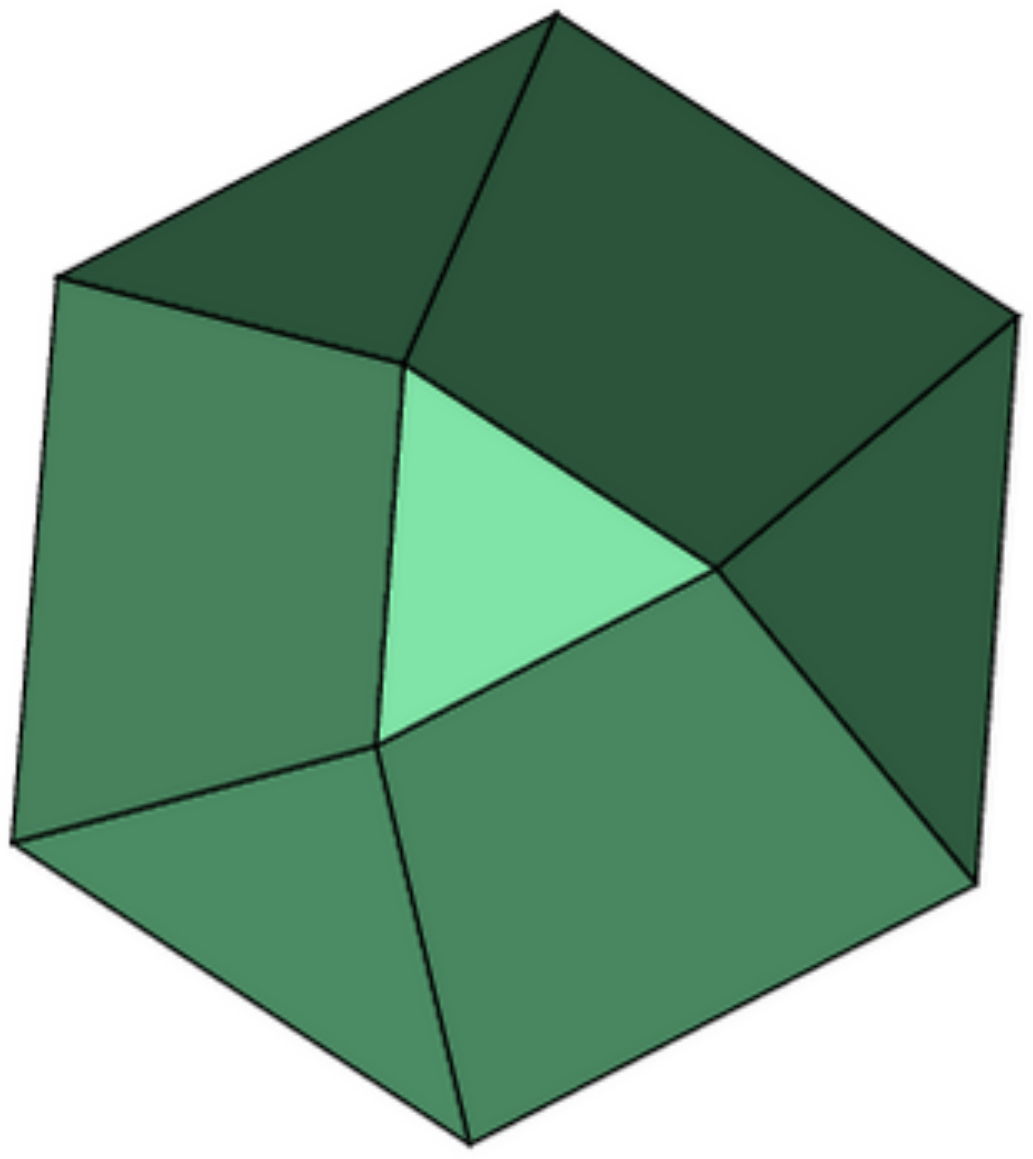} & 
 \includegraphics[width=0.15\textwidth]{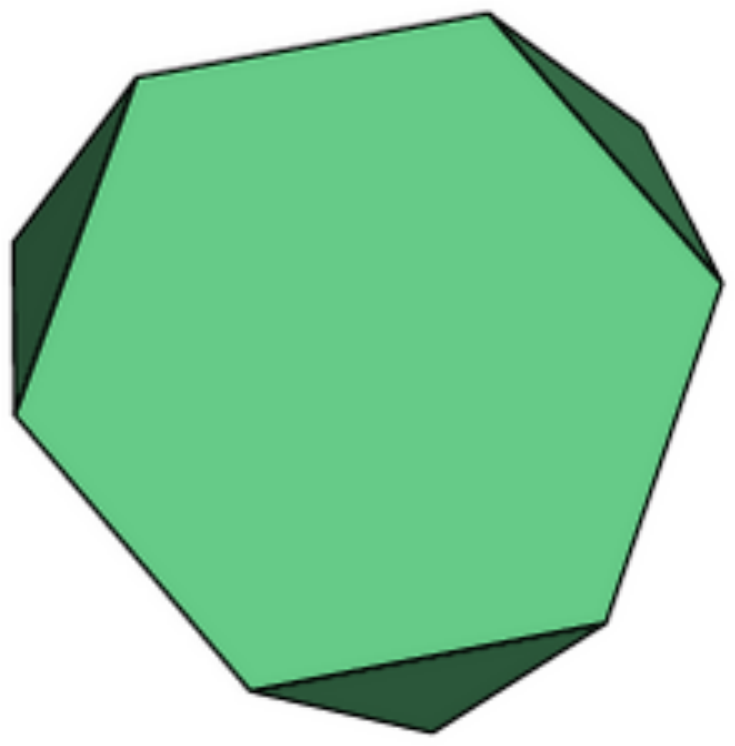}\\
Triangular Prism & Triangular Prism & Hexagonal Cap & Truncated Tetrahedron\\
$\phi_1$ & $\phi_2$ & $\phi_3$ & $\phi_4$
\end{tabular}
\end{center}
The polytopes are given explicitly, and the gluing maps can be computed.  There
are four types of $3$-dimensional cells, six types of $2$-dimensional faces, and
four types of $1$-dimensional edges.  We have $H_1(\Vor_{2,-43})$ is the
cokernel of the 
differential from $2$-cells to $1$-cells, and $H_2(\Vor_{2,-43})$ is the kernel
of the differential from $2$-cells to $1$-cells modulo the image of 
the differential from $3$-cells to $2$-cells.  An explicit linear algebra
computation gives the result. 

For $d = -67$, there are seven equivalence classes of perfect forms that give rise to one octahedron, two triangular prisms, one hexagonal cap, two square pyramids, and one truncated tetrahedron.  This gives seven types of $3$-dimensional cells, thirteen types of $2$-dimensional faces, and eight
types of $1$-dimensional edges.  Again, we compute the differentials, and an explicit linear algebra computation gives the result.

For $d = -163$, there are twenty-five equivalence classes of perfect forms that
give rise to eleven tetrahedra, one cuboctahedron, eight triangular prisms, two
hexagonal caps, and three square pyramids.  This gives twenty-five types of
$3$-dimensional cells, forty-nine types of $2$-dimensional faces, and
twenty-seven types of $1$-dimensional edges.  Again, we compute the
differentials, and an explicit linear algebra computation gives the result.
\end{proof}
\setcounter{MaxMatrixCols}{21}
\begin{corollary} \label{GL2}
For $d \in \{-43, -67, -163\}$, modulo $\cS_3$, 
\[H_0(\GL_2(\OO_d);\St_n(\QQ(\sqrt{d}))) \cong H^2(\GL_2(\OO_d)) \cong 0.\]  
\end{corollary}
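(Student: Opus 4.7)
The plan is to combine the explicit computation in \autoref{thm5.5} with the comparison isomorphism stated just before it. The previous corollary already identifies $H^2(\GL_2(\OO_d))$ with $H_1(\Vor_{2,d})$ modulo the Serre class $\cS_3$, so what remains is purely a matter of reading off the torsion primes in the Voronoi computation.

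First I would invoke \autoref{thm5.5}, which computes $H_1(\Vor_{2,d})$ for $d \in \{-43, -67, -163\}$ to be respectively $\ZZ/2\ZZ$, $(\ZZ/2\ZZ)^2$, and $(\ZZ/2\ZZ)^6$. The crucial observation is that in each case these groups are finite and $2$-torsion, hence their orders are divisible only by the prime $2$. In particular they all lie in the Serre class $\cS_3$ of finite abelian groups whose orders have only prime divisors $\leq 3$. Thus $H_1(\Vor_{2,d}) \equiv 0$ modulo $\cS_3$ for each of the three values of $d$.

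Next, I would apply the previous corollary, which (using \autoref{thm:aimpaper} and the bound $b=3$ from \autoref{prop:bound_b}) gives the chain of isomorphisms
\[H_1(\Vor_{2,d}) \cong H_0(\GL_2(\OO_d); \St_2(\QQ(\sqrt d))) \cong H^2(\GL_2(\OO_d))\]
modulo $\cS_3$. Composing with the vanishing from the previous paragraph yields the desired conclusion.

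There is essentially no obstacle: the hard work, namely the explicit enumeration of perfect forms, the determination of the Voronoi complex, and the linear-algebra computation of $H_1(\Vor_{2,d})$, is already contained in \autoref{thm5.5}. The only subtlety to flag is that the Serre-class reduction is harmless here precisely because $2 \leq 3$, so the $2$-torsion detected in the Voronoi homology is invisible after reduction modulo $\cS_3$; this is what allows the contrast with the $\SL_2$ calculation of \autoref{NonvanishingImaginary}, where the presence of the central $\{\pm 1\}$ forces additional rational classes that survive.
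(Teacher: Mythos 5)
Your argument is correct and is exactly the (implicit) reasoning in the paper: \autoref{thm5.5} shows $H_1(\Vor_{2,d})$ is $2$-torsion for the three values of $d$, hence lies in $\cS_3$, and the isomorphisms from the preceding corollary transfer this vanishing to $H^2(\GL_2(\OO_d))$ modulo $\cS_3$. Nothing further is needed.
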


\subsection{Non-vanishing for $2n \geq 4$}

\hfill

The goal of this subsection is to leverage the calculations of the previous section to prove \autoref{mainCohomology}.

 \begin{lemma} \label{StB2}
 Let $R$ be a PID. Then  $\widetilde H_0(\cB_2;\Q)_{\SL_2(R)} \cong (\St_2 \otimes \Q)_{\SL_2(R)}$. 
 \end{lemma}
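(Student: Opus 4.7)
The plan is to realize both sides of the claimed isomorphism as quotients of the same group and then show that the extra relations present on the left are killed when one passes to $\SL_2(R)$-coinvariants over $\Q$. Since $R$ is a PID, every rank-one direct summand of $R^2$ is free of the form $Rv$ for a primitive $v$ (unique up to a unit), so the vertex set of $\cB_2'(R)$ is canonically identified with the vertex set of $\cT_2(R)$. Both complexes are $0$-dimensional, and hence agree, which gives a natural isomorphism $\widetilde H_0(\cB_2';\Q) \cong \St_2 \otimes \Q$. The long exact sequence of the pair $(\cB_2, \cB_2')$ (over $\Q$) reads
\[
H_1(\cB_2, \cB_2';\Q) \xrightarrow{\;\partial\;} \widetilde H_0(\cB_2';\Q) \longrightarrow \widetilde H_0(\cB_2;\Q) \longrightarrow 0,
\]
the final $0$ because $\cB_2'$ contains every vertex of $\cB_2$. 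The group $H_1(\cB_2, \cB_2';\Q)$ is freely spanned over $\Q$ by the $1$-simplices of $\cB_2$, namely the (oriented) integral frames $[L_1,L_2]$ satisfying $R^2 = L_1 \oplus L_2$, and $\partial[L_1,L_2] = [L_2]-[L_1]$ in $\St_2 \otimes \Q$.

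Coinvariants are right exact, so the above sequence yields a surjection $(\St_2 \otimes \Q)_{\SL_2(R)} \twoheadrightarrow \widetilde H_0(\cB_2;\Q)_{\SL_2(R)}$ whose kernel is the image of the composition $H_1(\cB_2,\cB_2';\Q)_{\SL_2(R)} \to (\St_2\otimes\Q)_{\SL_2(R)}$. It therefore suffices to check that the class of $[L_1]-[L_2]$ is zero in $(\St_2 \otimes \Q)_{\SL_2(R)}$ for every integral frame $(L_1,L_2)$. Because $R$ is a PID, $\SL_2(R)$ acts transitively on integral frames: given primitive generators $v_i \in L_i$, the matrix with columns $(v_1,v_2)$ lies in $\GL_2(R)$, and rescaling $v_2$ by the unit $\det(v_1|v_2)^{-1}$ (which does not change the line $L_2 = Rv_2$) produces an element of $\SL_2(R)$ carrying $(L_1,L_2)$ to the standard frame $(Re_1,Re_2)$. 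Hence we are reduced to showing that $[Re_1]-[Re_2]$ vanishes in $(\St_2 \otimes \Q)_{\SL_2(R)}$.

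For the final step, the matrix $g = \bigl(\begin{smallmatrix} 0 & 1 \\ -1 & 0 \end{smallmatrix}\bigr) \in \SL_2(R)$ sends $e_1 \mapsto -e_2$ and $e_2 \mapsto e_1$, so it interchanges the lines $Re_1$ and $Re_2$. In $\St_2$, this gives $g\cdot([Re_1]-[Re_2]) = [Re_2] - [Re_1] = -([Re_1]-[Re_2])$. Identifying $x$ with $g\cdot x$ in the coinvariants forces $2([Re_1]-[Re_2]) = 0$, which is zero in the $\Q$-vector space $(\St_2 \otimes \Q)_{\SL_2(R)}$. This closes the argument. There is no real obstacle here — the mathematical content is an easy $\mathbb{Z}/2$-sign argument — but one has to be a bit careful to identify the connecting homomorphism $\partial$ with the integral-apartment-class map and to verify $\SL_2(R)$-transitivity on integral frames, both of which use that $R$ is a PID in essential ways.
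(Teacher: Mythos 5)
Your proof is correct and takes essentially the same route as the paper's: both use the long exact sequence of the pair $(\cB_2,\cB_2')$, identify $\widetilde H_0(\cB_2')\cong\St_2$ via $\cB_2'\cong\cT_2$ over a PID, and kill the image of the boundary map after passing to $\Q$-coinvariants by a $\Z/2$ sign argument with an $\SL_2(R)$-element that swaps the two lines of a frame. The paper phrases this as the outright vanishing of $H_1(\cB_2,\cB_2';\Q)_{\SL_2(R)}$ rather than of the image of $\partial$, but the underlying computation is the same.
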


\begin{proof}

We will first show that $H_1(\cB_2,\cB_2';\Q)_{\SL_2(R)} \cong 0$ by the usual argument showing integrality implies homological vanishing (see \cite[proof of Theorem C]{CFP-Integrality}). As discussed in \autoref{sec4}, $H_1(\cB_2,\cB_2')$ is the quotient of the free abelian group on symbols $(F_1,F_2)$ with $F_1$ and $F_2$ rank-one free submodules of $R^2$ with $R^2 = F_1 \oplus F_2$ modulo the relation that $(F_1,F_2)=-(F_2,F_1)$. Let $g$ be an element of $\SL_2(R)$ with $g(F_1)=F_2$ and $g(F_2)=F_1$. We have that $g$ acts via multiplication by $-1$ on $(F_1,F_2)$. Thus $(F_1,F_2)=-(F_1,F_2)$ in $H_1(\cB_2,\cB_2';\Q)_{\SL_2(R)}$. Since $2$ is invertible in $\Q$, this element and hence the group vanishes. 

Since $\cB_2'$ is the zero skeleton of $\cB_2$, $H_0(\cB_2,\cB_2';\Q) \cong 0$. Since $R$ is a PID, $\cB_2' \cong \cT_2$. Thus, $\widetilde H_0(\cB_2') \cong \St_2$. The claim now follows from applying the coinvariants functor to the exact sequence: \[ H_1(\cB_2,\cB_2';\Q)  \m \widetilde H_0(\cB_2';\Q) \m \widetilde H_0(\cB_2;\Q) \m H_0(\cB_2,\cB_2';\Q). \qedhere  \] 
\end{proof}


Since $\St_n(R)$ is a $\GL_n(R)$--representation, $\St_n(R)_{\SL_n(R)}$ inherits the structure of a $\GL_n(R)/\SL_n(R)$--representation. Similarly, the $\SL_n(R)$--coinvariants of \[\bigoplus_{\substack{V \subseteq R^n \\ \rank(V)=2}}  \St(R^n/V) \otimes  \widetilde H_0(\B(V))\] has a natural linear action of $\GL_n(R)/\SL_n(R)$. Combining results from the previous section gives the following.

\begin{lemma} \label{lemmaSurj} Let $R$ be a PID with group of units $\{\pm 1\}$.  There is a surjection  \[\St_n(R)_{\SL_n(R)} \longrightarrow \left( \bigoplus_{\substack{V \subseteq R^n \\ \rank(V)=2}}  \St(R^n/V) \otimes  \widetilde H_0(\B(V))\right)_{\SL_n(R)}.\]  
This surjection is equivariant with respect to the action of $\Z/2\Z \cong \GL_n(R)/\SL_n(R)$.
\end{lemma}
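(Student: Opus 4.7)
The plan is to obtain the surjection by applying the right-exact functor of $\SL_n(R)$-coinvariants to the short exact sequence of \autoref{StSES}. For $n \geq 3$, that result gives
\[0 \longrightarrow E^2_{n-2,0} \longrightarrow \St_n \longrightarrow H_{n-3}(\T_n; \widetilde{H}_0(\B(-))) \longrightarrow 0,\]
and \autoref{H(T_n,tilde H_0)} already identifies the rightmost term as $\bigoplus_{\rank V = 2} \St(R^n/V) \otimes \widetilde{H}_0(\B(V))$, which is exactly the module appearing in the target of the surjection we want to construct.

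First, I would argue that this short exact sequence is a sequence of $\GL_n(R)$-modules. The map of posets $f\colon \B'_n \to \T_n$ is $\GL_n(R)$-equivariant, so the whole map-of-posets spectral sequence from the previous section is a spectral sequence of $\GL_n(R)$-modules; likewise, the short exact sequence of coefficient functors $0 \to \widetilde{H}_0(\B(-)) \to H_0(\B(-)) \to \Z \to 0$ used in the proof of \autoref{E2/Einfty} is one of $\GL_n(R)$-equivariant functors on $\T_n$. Hence the long exact sequence from which \autoref{StSES} is extracted is $\GL_n(R)$-equivariant, and so is the three-term sequence itself.

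Next, I would apply the right-exact functor of $\SL_n(R)$-coinvariants to obtain
\[(E^2_{n-2,0})_{\SL_n(R)} \longrightarrow (\St_n)_{\SL_n(R)} \longrightarrow H_{n-3}(\T_n; \widetilde{H}_0(\B(-)))_{\SL_n(R)} \longrightarrow 0.\]
Because all maps are $\GL_n(R)$-equivariant, the residual action of $\GL_n(R)/\SL_n(R) \cong \Z/2\Z$ (the latter isomorphism using that $R^\times = \{\pm 1\}$) is well-defined and the surjection $(\St_n)_{\SL_n(R)} \twoheadrightarrow H_{n-3}(\T_n; \widetilde{H}_0(\B(-)))_{\SL_n(R)}$ is $\Z/2\Z$-equivariant. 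Combining this surjection with the identification from \autoref{H(T_n,tilde H_0)} gives the claim.

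The main obstacle is bookkeeping: one must verify that each stage of the construction in Section 4 respects the ambient $\GL_n(R)$-action, rather than merely the $\SL_n(R)$-action implicit in taking coinvariants. This is essentially formal since every poset, functor, and spectral sequence involved is defined by naturally $\GL_n(R)$-equivariant data, but it is worth making explicit in order to justify the $\Z/2\Z$-equivariance.
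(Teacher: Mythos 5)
Your proposal is correct and takes essentially the same route as the paper: the paper also obtains the surjection $\St_n \twoheadrightarrow H_{n-3}(\T_n;\widetilde H_0(\B(-)))$ from \autoref{StSES}, identifies the target via \autoref{H(T_n,tilde H_0)}, observes $\GL_n(R)$-equivariance, and concludes by right-exactness of $\SL_n(R)$-coinvariants. The only difference is that you spell out the bookkeeping of $\GL_n(R)$-equivariance, which the paper leaves implicit.
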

\begin{proof} 

 By \autoref{H(T_n,tilde H_0)} and \autoref{StSES}, there is a $\GL_n(R)$-equivariant surjection 
\[ \St_n(R) \longrightarrow  H_{n-3}(\T_n;\widetilde H_0(\B(-)))  \cong \bigoplus_{\substack{V \subseteq R^n \\ \rank(V)=2}}  \St(R^n/V) \otimes  \widetilde H_0(\B(V)). \]  The claim follows from the right-exactness of coinvariants. 
\end{proof}
 
Both $(\St_{n-2}(R))_{\SL_{n-2}(R)}$ and $(\widetilde H_0(\B_2))_{\SL_2(R)}$ have a linear action by $R^\times \cong \GL_{n-2}(R)/\SL_{n-2}(R) \cong \GL_2(R)/\SL_2(R)$. Because $R^\times$ is commutative,  $(\St_{n-2}(R))_{\SL_{n-2}(R)} \otimes_{\Z[R^\times]} (\widetilde H_0(\B_2))_{\SL_2(R)}$ has a linear action by $R^\times$ (that only acts on one factor) as well.

\begin{lemma} Let $R$ be a PID with group of units $\{\pm 1\}$. There is an isomorphism
\[ \left( \bigoplus_{\substack{V \subseteq R^n \\ \rank(V)=2}}  \St(R^n/V) \otimes  \widetilde H_0(\B(V))\right)_{\SL_n(R)} \cong  (\St_{n-2}(R))_{\SL_{n-2}(R)} \otimes_{\Z[\Z/2\Z]} (\widetilde H_0(\B_2))_{\SL_2(R)},  \]  and this isomorphism is equivariant with respect to the action of $\Z/2\Z \cong \GL_n(R)/\SL_n(R) \cong R^\times$.
\label{lemmaCoinvariance}

\end{lemma}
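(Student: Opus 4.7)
The plan is to reduce the direct sum to a single summand via orbit considerations, then carefully analyze the resulting stabilizer.

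\textbf{Step 1 (reduction to one summand).} Since $R$ is a PID, every rank-$2$ direct summand $V \subseteq R^n$ is free and admits a complement that is free of rank $n-2$, so extending a basis gives transitivity of $\GL_n(R)$ on the set $\Omega$ of such summands. Moreover, since one may rescale a single complementary basis vector by any unit, $\SL_n(R)$ already acts transitively on $\Omega$. Fix $V_0 = R^2 \oplus 0 \subseteq R^n$, let $P \leq \SL_n(R)$ be its stabilizer, and write $M_V = \St(R^n/V) \otimes \widetilde H_0(\B(V))$. The direct sum is $\bigoplus_V M_V \cong \Z[\SL_n(R)] \otimes_{\Z[P]} M_{V_0}$, so passing to $\SL_n(R)$-coinvariants yields the standard identification
\[
\Big(\bigoplus_{V} M_V\Big)_{\SL_n(R)} \;\cong\; (M_{V_0})_{P}.
\]

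\textbf{Step 2 (parabolic decomposition).} The stabilizer $P$ consists of block upper triangular matrices $\bigl(\begin{smallmatrix} A & B \\ 0 & D \end{smallmatrix}\bigr)$ with $A \in \GL_2(R)$, $D \in \GL_{n-2}(R)$, arbitrary $B$, and $\det(A)\det(D) = 1$. Write $P = U \rtimes L$, where $U$ is the unipotent part (the $B$-block) and $L$ is the Levi. The unipotent part acts trivially on $V_0$ and trivially on $R^n/V_0$, hence trivially on $M_{V_0} = \St_{n-2}(R) \otimes \widetilde H_0(\B_2)$, so $(M_{V_0})_P = (M_{V_0})_L$.

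\textbf{Step 3 (two-stage coinvariants for the Levi).} The Levi $L$ fits into a short exact sequence
\[
1 \longrightarrow \SL_2(R) \times \SL_{n-2}(R) \longrightarrow L \xrightarrow{\;\det_1\;} R^\times \longrightarrow 1,
\]
where the surjection sends $(A,D)$ to $\det(A) = \det(D)^{-1}$; since $R^\times = \{\pm 1\}$, the quotient is $\Z/2\Z$. The action on $M_{V_0} = \widetilde H_0(\B_2) \otimes \St_{n-2}(R)$ is the diagonal one via the two factors, so taking coinvariants by $\SL_2(R) \times \SL_{n-2}(R)$ gives
\[
(\widetilde H_0(\B_2))_{\SL_2(R)} \otimes_\Z (\St_{n-2}(R))_{\SL_{n-2}(R)},
\]
with a residual diagonal action of $\Z/2\Z \cong R^\times$ through the identifications $\GL_2(R)/\SL_2(R) \cong R^\times$ and $\GL_{n-2}(R)/\SL_{n-2}(R) \cong R^\times$.

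\textbf{Step 4 (diagonal coinvariants equal balanced tensor).} For any two $\Z[\Z/2\Z]$-modules $X,Y$ with generator $\sigma$, the identity $\sigma x \otimes y = x \otimes \sigma y$ in $X \otimes_{\Z[\Z/2\Z]} Y$ is equivalent (via $y \mapsto \sigma y$) to $\sigma x \otimes \sigma y = x \otimes y$, which is precisely the diagonal-coinvariants relation. Hence
\[
(X \otimes_\Z Y)_{\Z/2\Z, \mathrm{diag}} \;\cong\; X \otimes_{\Z[\Z/2\Z]} Y,
\]
which applied to $X = (\widetilde H_0(\B_2))_{\SL_2(R)}$ and $Y = (\St_{n-2}(R))_{\SL_{n-2}(R)}$ yields the claimed isomorphism.

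\textbf{Step 5 (equivariance).} Lift a generator of $\GL_n(R)/\SL_n(R)$ to $g = \mathrm{diag}(-1,1,\dots,1) \in \GL_n(R)$. Then $g$ stabilizes $V_0$, acts on $V_0$ by $\mathrm{diag}(-1,1)$, and acts trivially on $R^n/V_0$; its action on the right-hand side is therefore by the $R^\times$-action on the first tensor factor. By the balanced tensor product, this coincides with the $R^\times$-action on the second factor, so the isomorphism intertwines the two $\Z/2\Z$-actions.

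The main technical point is Step~1, i.e., being careful that $\SL_n(R)$ (and not merely $\GL_n(R)$) acts transitively on rank-$2$ summands, so that the Shapiro-style reduction of coinvariants over the orbit is valid; everything else reduces to standard manipulations of parabolic subgroups and coinvariants.
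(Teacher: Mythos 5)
Your proof is correct and follows essentially the same route as the paper's: reduce to a single orbit representative via the Shapiro-style identification $(\bigoplus_V M_V)_{\SL_n(R)} \cong (M_{V_0})_P$, note that the unipotent radical acts trivially so everything factors through the Levi, take coinvariants in two stages through $\SL_2 \times \SL_{n-2} \trianglelefteq L$ with quotient $\Z/2\Z$, and identify diagonal $\Z/2\Z$-coinvariants of the tensor product with the balanced tensor product over $\Z[\Z/2\Z]$. You are slightly more explicit than the paper in spelling out that $\SL_n(R)$ (not just $\GL_n(R)$) acts transitively on rank-two summands, and in verifying the residual $\Z/2\Z$-equivariance via the lift $\mathrm{diag}(-1,1,\dots,1)$, but the underlying argument is the same.
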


\begin{proof}  
Define
\[ G=  \left\{\begin{bmatrix} A & * \\ 0 & B \end{bmatrix} \; \middle| \; A \in \mathrm{GL}_2(R),  B \in \mathrm{GL}_{n-2}(R), \; \mathrm{det}(A)\mathrm{det}(B)=1 \right\} \subseteq \SL_n(R) \] to be the stabilizer of the standard copy of $R^2$ in $R^n$. Then \begin{align*}
& \left( \bigoplus_{\substack{V \subseteq R^n \\ \rank(V)=2}}  \St(R^n/V) \otimes_{\Z}  \widetilde H_0(\B(V))\right)_{\SL_n(R)} \\
 &\cong \Z \otimes_{\Z[\SL_n(R)]} \left( \bigoplus_{\substack{V \subseteq R^n \\ \rank(V)=2}}  \St(R^n/V) \otimes_{\Z}   \widetilde H_0(\B(V))\right)\\ 
& \cong \Z \otimes_{\Z[\SL_n(R)]} \left( \Z[\SL_n(R)] \otimes_G \left( \St(R^n/R^2) \otimes_{\Z}   \widetilde H_0(\B(R^2)) \right) \right)\\ 
&\cong \Z  \otimes_G \left( \St (R^n/R^2) \otimes_{\Z}   \widetilde H_0(\B(R^2)) \right). 
\end{align*}
Observe that the subgroup $\left[\begin{smallmatrix} \mathrm{Id}_2 & * \\ 0 & \mathrm{Id}_{n-2} \end{smallmatrix}\right] \subseteq G$ acts trivially, so the action by $G$ factors through an action of 
 \[H =  \left\{\begin{bmatrix} A & 0 \\ 0 & B \end{bmatrix} \; \middle| \; A \in \mathrm{GL}_2(R),  B \in \mathrm{GL}_{n-2}(R), \; \mathrm{det}(A)\mathrm{det}(B)=1 \right\} \quad \cong \quad \Big( \mathrm{SL}_2(R) \times \mathrm{SL}_{n-2}(R) \Big) \rtimes  \mathbb{Z}/2\mathbb{Z}.\] Thus 
\begin{align*}
&\;\;\Z  \otimes_G \left( \St (R^n/R^2) \otimes_{\Z}   \widetilde H_0(\B(R^2)) \right)  \\ 
&\cong   \left( \St (R^n/R^2) \otimes_{\Z}   \widetilde H_0(\B(R^2)) \right)_H \\
&\cong \left( \left(\St_{n-2}(R^n/R^2) \otimes_{\Z}  \widetilde H_0(\B(R^2))\right)_{\mathrm{SL}_2(R) \times \mathrm{SL}_{n-2}(R)} \right)_{H / (\mathrm{SL}_2(R) \times \mathrm{SL}_{n-2}(R))} \\
&\cong \left(\St_{n-2}(R^n/R^2)_{\mathrm{SL}_{n-2}(R)} \otimes_{\Z}  \widetilde H_0(\B(R^2))_{\mathrm{SL}_{2}(R)}\right)_{\Z/2\Z} \\ 
&\cong \St_{n-2}(R^n/R^2)_{\mathrm{SL}_{n-2}(R)} \otimes_{\Z[\Z/2\Z]}   \widetilde H_0(\B(R^2))_{\mathrm{SL}_{2}(R)} \\ 
\end{align*}
as claimed. 
\end{proof}


 By Borel-Serre duality, the following is equivalent to \autoref{mainCohomology}.

\begin{proposition} For all $n\ge 1$, we have \[\dim_{\Q} H_0(\SL_{2n}(\cO_{d});\Q \otimes \St_{2n}(\Q(\sqrt d))) \geq 
 \begin{cases}
1 & \text{ for $d=-43$,} \\
2^n & \text{ for $d=-67$,} \\
6^n & \text{ for $d=-163$.}
\end{cases}
\]
\end{proposition}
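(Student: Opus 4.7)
The plan is to reformulate the statement via Borel--Serre duality and then run a short induction on $n$. Borel--Serre duality identifies $H^{\nu_{2n}}(\SL_{2n}(\cO_d);\Q)$ with the coinvariants $S_n := (\St_{2n}(K)\otimes\Q)_{\SL_{2n}(\cO_d)}$, so it suffices to show $\dim_{\Q} S_n \geq c^n$, where $c=1,2,6$ for $d=-43,-67,-163$. Since $\cO_d^\times = \{\pm 1\}$ for these $d$, the quotient $\GL_{2n}/\SL_{2n} \cong \Z/2\Z$ acts on $S_n$, and I split $S_n = S_n^+ \oplus S_n^-$ into $\pm 1$-eigenspaces for the nontrivial element $\sigma$.

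Composing the $\Z/2\Z$-equivariant surjection from \autoref{lemmaSurj} (applied with $2n$ in place of $n$) with the isomorphism of \autoref{lemmaCoinvariance}, and invoking \autoref{StB2} to identify $(\widetilde H_0(\B_2)\otimes\Q)_{\SL_2(\cO_d)}$ with $S_1$, I obtain a $\Z/2\Z$-equivariant surjection
\[ S_n \;\twoheadrightarrow\; S_{n-1}\otimes_{\Q[\Z/2\Z]} S_1. \]
Decomposing $\Q[\Z/2\Z] \cong \Q\oplus \Q$ via the orthogonal idempotents $\tfrac{1\pm \sigma}{2}$ splits the target as
\[ S_{n-1}\otimes_{\Q[\Z/2\Z]} S_1 \;\cong\; (S_{n-1}^+\otimes_\Q S_1^+)\;\oplus\; (S_{n-1}^-\otimes_\Q S_1^-), \]
with the two summands lying respectively in the trivial and the sign eigenspace of the induced $\sigma$-action.

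Now \autoref{GL2} intervenes: it gives $S_1^+ \cong (\St_2\otimes\Q)_{\GL_2(\cO_d)} = 0$, so the first summand vanishes. Consequently $S_1 = S_1^-$ has dimension at least $c$ by \autoref{NonvanishingImaginary}, and equivariance of the surjection forces $\dim_\Q S_n^- \geq \dim_\Q S_{n-1}^-\cdot c$. Induction on $n$ with base case $n=1$ then yields $\dim_\Q S_n \geq \dim_\Q S_n^- \geq c^n$.

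The main bookkeeping point will be confirming that the identification of \autoref{StB2} is $\GL_2/\SL_2$-equivariant, so that the composed surjection is genuinely $\Z/2\Z$-equivariant; this should follow from naturality of the maps in the proof of \autoref{StB2} (each arrow is induced by an $R$-linear construction). Beyond that, the argument is a direct application of the previously developed machinery combined with the vanishing statement coming from the Voronoi computation.
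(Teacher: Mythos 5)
Your proposal is correct and follows essentially the same route as the paper: both pass through Borel--Serre duality and the equivariant surjection assembled from \autoref{lemmaSurj}, \autoref{lemmaCoinvariance}, and \autoref{StB2}, then decompose coinvariants into $\Z/2\Z$-eigenspaces and use the vanishing of the trivial part coming from \autoref{GL2}; the paper records the two inequalities $t_n \geq t_{n-2} t_2$ and $s_n \geq s_{n-2} s_2$ and sums, whereas you track only the sign eigenspace inductively, but these coincide once $t_2 = 0$. The $\Z/2\Z$-equivariance of the isomorphism in \autoref{StB2} that you flag is indeed used tacitly by the paper as well, and it holds since every arrow in that proof is induced by a $\GL_2(R)$-equivariant chain map.
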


\begin{proof}
Recall that the coinvariants $\Q\otimes_{\Z}(\St_{n}(\Q(\sqrt d)))_{\SL_{n}(\OO_d)}$ are a representation of $\Z/2\Z \cong \GL_n(\OO_d)/\SL_n(\OO_d)$. Let $t_n$ denote the multiplicity of the trivial representation, and $s_n$ denote the multiplicity of the sign representation in $\Q\otimes_{\Z}(\St_{n}(\Q(\sqrt d)))_{\SL_{n}(\OO_d)}$. By \autoref{StB2}, \autoref{lemmaSurj} and \autoref{lemmaCoinvariance}, there is an equivariant surjection \[ (\St_n)_{\SL_n} \otimes \Q \twoheadrightarrow  \left( (\St_{n-2})_{\SL_{n-2}} \otimes_{\Z[\Z/2\Z]} (\St_2)_{\SL_2}  \right) \otimes \Q \] and so \[ t_n  \geq t_{n-2}t_2  \qquad \text{and} \qquad  s_n \geq s_{n-2}s_2.  \] Thus, 
\[  \dim_{\Q} H_0(\SL_{2n}(\cO_d);\Q \otimes \St_{2n}(\Q(\sqrt d)))=  t_{2n}+s_{2n}  \geq (t_2)^n+(s_2)^n. \] Since \[t_n=\dim_\Q H_0(\GL_n(\OO_d);\Q \otimes \St_n(\Q(\sqrt d))) \text{ and }t_n+s_n=\dim_\Q H_0(\SL_n(\OO_d);\Q \otimes \St_n(\Q(\sqrt d))),\]  \autoref{NonvanishingImaginary} and \autoref{GL2} give $t_2=0$ and \[s_2=
 \begin{cases}
1 & \text{ for $d=-43$,} \\
2 & \text{ for $d=-67$,} \\
6 & \text{ for $d=-163$.}
\end{cases} \qedhere
\] \end{proof}

\begin{remark} 
Since $s_1=0$, one cannot easily use the proof strategy of \autoref{mainCohomology} to show non-vanishing of $H^{\nu_n}(\SL_n(\cO_d);\Q)$ for all $n$ and $d=-43,-67,$ or $-163$. In fact, $s_{n}=0$ for all $n$ odd. To see this, note that for $n$ odd, the $\Z/2\Z$-action on $H^{\nu_n}(\SL_n(\cO_d);\Q)$ is induced by the action on the Tits building given by multiplication by $\pm \Id_n$ which is trivial at the level of posets. 
\end{remark}

\begin{remark} 
Let $d=-19, -43,-67,$ or $-163$, let $\mathfrak p$ be a prime ideal in $\cO_d$, and let $R$ be $\cO_d$ with $\mathfrak p$ inverted. Then it follows from Church--Farb--Putman \cite[Theorem A]{CFP-Integrality} that $\St_n(\Q(\sqrt d))$ is generated by integral (with respect to the ring $R$) apartment classes. The proof of \cite[Theorem C]{CFP-Integrality} shows that $H_0(\SL_n(R);\St_n(\Q(\sqrt d)) \otimes \Z[\frac{1}{2}]) \cong 0$ for all $n \geq 2$. In other words, the phenomenon explored in this paper do not persist after inverting even a single prime.

\end{remark}

\begin{remark} \label{mapBounds}
Let $d=-43$, $-67$, or $-163$. The bounds in \autoref{mainCohomology} come from a surjection \[H_0(\SL_{2n}(\OO_d);\St_n(\Q(\sqrt d)) \otimes \Q ) \twoheadrightarrow  \left( H_0(\SL_2(\OO_d);\St_2(\Q(\sqrt d));\Q ) \right)^{n \otimes_{\Z/2\Z}}. \] It follows from our proofs that this map is induced by taking coinvariants of a (not necessarily surjective) map \[\Delta\colon \St_{2n}(\Q(\sqrt d))  \m 
\bigoplus_{0 \subsetneq V_1 \subsetneq V_2  \subsetneq \cdots  \subsetneq V_{n-1} \subsetneq \left(\Q(\sqrt d)\right)^{2n} ,\, \dim V_i \text{ even}} \left (\bigotimes_i \St(V_{i+1}/V_i) \right). \]  Since (not necessarily integral) apartment classes generate the Steinberg module, it suffices to describe the map $\Delta$ on apartment classes. Fix lines $L_1,\ldots,L_{2n}$ with $\left(\Q(\sqrt d) \right)^{2n} = L_1 \oplus \ldots \oplus L_{2n}$ and let $[L_1,\ldots, L_{2n}] \in \St_{2n}(\Q(\sqrt d))$ denote the associated apartment class. Let $X_n$ be the set of permutations of $2n$ such that $\sigma(2i-1)<\sigma(2i)$ for all $i$. After unpacking the definition of the connecting homomorphism used in our proof, one can check that $\Delta$ is given by the formula: \[ \Delta([L_1,\ldots, L_{2n}])=\sum_{\sigma \in X_n} \sgn(\sigma) [L_{\sigma(1)},L_{\sigma(2)}] \otimes  \cdots \otimes [L_{\sigma(2n-1)},{L_{\sigma(2n)}}]  \] with $[L_{\sigma(2i-1)},L_{\sigma(2i)}] \in \St\left((L_{\sigma(1)} \oplus \cdots \oplus L_{\sigma(2i)} )/ (L_{\sigma(1)} \oplus \cdots \oplus  L_{\sigma(2i-2)} )\right) $.

\end{remark}

{\footnotesize
\bibliographystyle{amsalpha}
\bibliography{Steinberg}
}

\end{document}